\numberwithin{equation}{section}
\newcommand{\ie}{I_{\epsilon}(x)}
\newcommand{\mathp}{\mathcal{P}}
\title{Spectral analysis of the truncated Hilbert transform with overlap}
\author{Reema Al-Aifari\footnotemark[1]
\and Alexander Katsevich\footnotemark[2]}
\begin{document}
\maketitle

\footnotetext[1]{Department of Mathematics, Vrije Universiteit Brussel, Brussels B-1050, Belgium}
\footnotetext[2]{Department of Mathematics, University of Central Florida, FL 32816, USA}

\begin{abstract}
We study a restriction of the Hilbert transform as an operator $H_T$ from $L^2(a_2,a_4)$ to $L^2(a_1,a_3)$ for real numbers $a_1 < a_2 < a_3 < a_4$. The operator $H_T$ arises in tomographic reconstruction from limited data, more precisely in the method of differentiated back-projection (DBP). There, the reconstruction requires recovering a family of one-dimensional functions $f$ supported on compact intervals $[a_2,a_4]$ from its Hilbert transform measured on intervals $[a_1,a_3]$ that might only overlap, but not cover $[a_2,a_4]$. We show that the inversion of $H_T$ is ill-posed, which is why we investigate the spectral properties of $H_T$.

We relate the operator $H_T$ to a self-adjoint \textit{two-interval Sturm-Liouville problem}, for which we prove that the spectrum is discrete. The Sturm-Liouville operator is found to commute with $H_T$, which then implies that the spectrum of $H_T^* H_T$ is discrete. Furthermore, we express the singular value decomposition of $H_T$ in terms of the solutions to the Sturm-Liouville problem. The singular values of $H_T$ accumulate at both $0$ and $1$, implying that $H_T$ is not a compact operator. We conclude by illustrating the properties obtained for $H_T$ numerically. 
\end{abstract}

\maketitle

\section{Introduction}\label{intro}
In tomographic imaging, which is widely used for medical applications, a 2D or 3D object is illuminated by a penetrating beam (usually X-rays) from multiple directions, and the projections of the object are recorded by a detector. Then one seeks to reconstruct the full 2D or 3D structure from this collection of projections. When the beams are sufficiently wide to fully embrace the object and when the beams from a sufficiently dense set of directions around the object can be used, this problem and its solution are well understood \cite{nat}. When the data are more limited, e.g. when only a reduced range of directions can be used or only a part of the object can be illuminated, the image reconstruction problem becomes much more challenging. 

Reconstruction from limited data requires the identification of specific subsets of line integrals that allow for an exact and stable reconstruction. One class of such configurations that have already been identified, relies on the reduction of the 2D and 3D reconstruction problem to a family of 1D problems. The Radon transform can be related to the 1D Hilbert transform along certain lines by differentiation and back-projection of the Radon transform data (\textit{differentiated back-projection} or DBP). Inversion of the Hilbert transform along a family of lines covering a sub-region of the object (\textit{region of interest} or ROI) then allows for the reconstruction within the ROI.

This method goes back to a result by Gelfand and Graev \cite{gg}. Its application to tomography was formulated by Finch \cite{finch} and was later made explicit for 2D in \cite{noo, yyww, zps} and for 3D in \cite{pnc, yzyw, zhuang, zp}. To reconstruct from data obtained by the DBP method, it is necessary to solve a family of 1D problems which consist of inverting the Hilbert transform data on a finite segment of the line. If the Hilbert transform $H f$ of a 1D function $f$ was given on all of $\mathbb{R}$, then the inversion would be trivial, since $H^{-1} = - H$. In case $f$ is compactly supported, it can be reconstructed even if $H f$ is not known on all of $\mathbb{R}$. Due to an explicit reconstruction formula by Tricomi \cite{tricomi}, $f$ can be found from measuring $H f$ only on an interval that covers the support of $f$.
However, a limited field of view might result in configurations in which the Hilbert transform is known only on a segment that does not completely cover the object support. One example of such a configuration is known as the interior problem \cite{cour, ekat, kcnd, yyw}. Given real numbers $a_1 < a_2 < a_3 < a_4$, the interior problem corresponds to the case in which the Hilbert transform of a function supported on $[a_1,a_4]$ is measured on the smaller interval $[a_2,a_3]$. 

In this paper, we study a different configuration, namely supp $f = [a_2,a_4]$ and the Hilbert transform is measured on $[a_1,a_3]$. We will refer to this configuration as the truncated problem \textit{with overlap}: the operator $H_T$ we consider is given by $\mathp_{[a_1,a_3]} H \mathp_{[a_2,a_4]}$, where $H$ is the usual Hilbert transform acting on $\mathcal{L}^2(\mathbb{R})$, and $\mathp_{\Omega}$ stands for the projection operator $(\mathp_{\Omega} f) (x) = f(x)$ if $x \in \Omega$, $(\mathp_{\Omega} f)(x) = 0$ otherwise. For finite intervals $\Omega_1$, $\Omega_2$ on $\mathbb{R}$, the \textit{interior} problem corresponds to $\mathp_{\Omega_1} H \mathp_{\Omega_2}$ for $\Omega_1 \subset \Omega_2$. The truncated Hilbert transform \textit{with a gap} occurs when the intervals $\Omega_1$ and $\Omega_2$ are separated by a gap, as in \cite{kat1}. Figure \ref{fig:ht} shows the different setups. Examples of configurations in which the truncated Hilbert transform \textit{with overlap} and the interior problem occur are given in Figures \ref{fig:tp} and \ref{fig:ip}. The truncated problem \textit{with overlap} arises for example in the "missing arm" problem. This is the case where the field of view is large enough to measure the torso but not the arms. 

\begin{figure}[ht!]
     \begin{center}
     
        \subfigure[Finite Hilbert transform.]{
            \label{fig:fht}
            \includegraphics[width=0.45\textwidth]{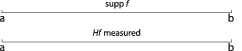}
        }
         \subfigure[Interior problem.]{
            \label{fig:iht}
            \includegraphics[width=0.45\textwidth]{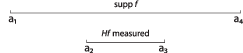}
        }\\
        \subfigure[Truncated Hilbert transform \textit{with overlap}.]{
           \label{fig:thto}
           \includegraphics[width=0.45\textwidth]{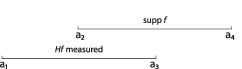}
        }
        \subfigure[Truncated Hilbert transform \textit{with a gap}.]{
           \label{fig:thtg}
           \includegraphics[width=0.45\textwidth]{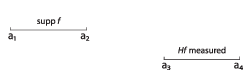}
        }
 \end{center}
    \caption{
       Different setups for $\mathp_{\Omega_1} H \mathp_{\Omega_2}$. The upper interval shows the support $\Omega_2$ of the function $f$ to be reconstructed. The lower interval is the interval $\Omega_1$ where measurements of the Hilbert transform $H f$ are taken. This paper investigates case (c).
     }
   \label{fig:ht}
\end{figure}

\begin{figure}[ht!]
     \begin{center}
        \subfigure[]{
            \label{fig:tp1}
            \includegraphics[width=0.45\textwidth]{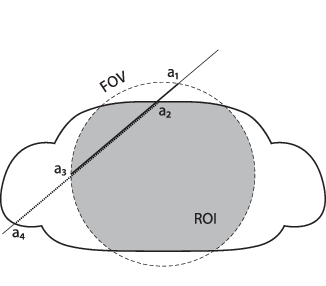}
        }
        \subfigure[]{
           \label{fig:tp2}
           \includegraphics[width=0.45\textwidth]{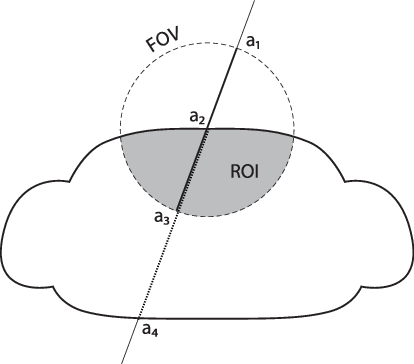}
        }
    \end{center}
     \caption{
       Two examples of the truncated problem \textit{with overlap}, Fig. \ref{fig:tp1} shows the missing arm problem. In both cases, the field of view (FOV) does not cover the object support. On the line intersecting the object, measurements can only be taken within the FOV, i.e. from $a_1$ to $a_3$. The Hilbert transform is not measured on $[a_3,a_4]$. Consequently, a reconstruction can only be aimed at in the grey-shaded intersection of the FOV with the object support, called the region of interest (ROI).
     }
   \label{fig:tp}
\end{figure}

\begin{figure}[ht!]
     \begin{center}
            \includegraphics[width=0.45\textwidth]{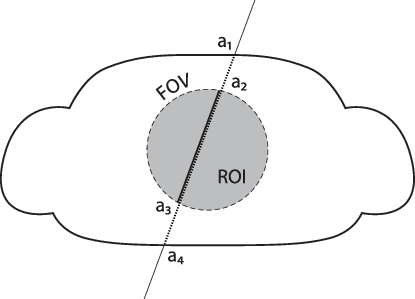}
    \end{center}
     \caption{
       The interior problem. Here, the FOV also does not cover the object support. The line intersecting the object is such that the Hilbert transform is only measured in a subinterval $[a_2,a_3]$ of the intersection $[a_1,a_4]$ of the line with the object support. The ROI is the grey-shaded intersection of the FOV with the object support. In this case stable reconstruction
of the shaded ROI is impossible unless additional information is available. 
     }
   \label{fig:ip}
\end{figure}

\clearpage

Fix any four real numbers $a_1 < a_2 < a_3 < a_4$. We define the truncated Hilbert transform \textit{with overlap} as the operator
\\
\begin{definition}
\begin{equation}\label{tht}
(H_T f)(x) := \frac{1}{\pi} \text{p.v. } \int_{a_2}^{a_4} \frac{f(y)}{y-x} dy, \quad x \in (a_1,a_3)
\end{equation}
where $p.v.$ stands for the principal value. In short,
\begin{equation*}
H_T := \mathp_{[a_1,a_3]} H \mathp_{[a_2,a_4]},
\end{equation*}
where $H$ is the ordinary Hilbert transform on $\mathcal{L}^2(\mathbb{R})$.
\end{definition}
\\

As we will prove in what follows, the inversion of $H_T$ is an ill-posed problem in the sense of Hadamard \cite{engl}. In order to find suitable regularization methods for its inversion, it is crucial to study the nature of the underlying ill-posedness, and therefore the spectrum $\sigma(H_T^* H_T)$. An important question that arises here is whether the spectrum is purely discrete. This question has been answered for similar operators before, but with two very different answers. In \cite{koppelman}, it was shown that the finite Hilbert transform defined as $H_F = \mathp_{[a,b]} H \mathp_{[a,b]}$ has a continuous spectrum $\sigma(H_F) = [-i,i]$. On the other hand, in \cite{kat2}, we find the result that for the interior problem $H_I = \mathp_{[a_2,a_3]} H \mathp_{[a_1,a_4]}$, the spectrum $\sigma(H_I^* H_I)$ is purely discrete. 

The main result of this paper is that $H_T^* H_T$ has only discrete spectrum. In addition, we obtain that $0$ and $1$ are accumulation points of the spectrum. Furthermore, we find that the singular value decomposition (SVD) of the operator $H_T$ can be related to the solutions of a Sturm-Liouville (S-L) problem. For the actual reconstruction, one would aim at finding $f$ in \eqref{tht} only within a region of interest (ROI), i.e. on $[a_2,a_3]$. A stability estimate as well as a uniqueness result for this setup were obtained by Defrise et al in \cite{defrise}. A possible method for ROI reconstruction is the truncated SVD. Thus, it is of interest to study the SVD of $H_T$ also for the development of reconstruction algorithms.

In \cite{kat1} and \cite{kat2}, singular value decompositions are obtained for the truncated Hilbert transform \textit{with a gap} $\mathp_{[a_3,a_4]} H \mathp_{[a_1,a_2]}$ and for $H_I$. This is done by relating the Hilbert transforms to differential operators that have discrete spectra. We follow this procedure, but obtain a differential operator that is different in nature. In \cite{kat1} and \cite{kat2} the discreteness of the spectra follows from standard results of singular S-L theory (see e.g. \cite{zettl}). In the case of truncated Hilbert transform \eqref{tht} we have to investigate the discreteness of the spectrum of the related differential operator explicitly.

The idea is to find a differential operator for which the eigenfunctions are the singular functions of $H_T$ on $(a_2,a_4)$. We define the differential operator similarly to the one in \cite{kat1}, \cite{kat2}, but then the question is which boundary conditions to choose in order to relate the differential operator to $H_T$. To answer this question we first develop an intuition about the singular functions of $H_T$. 

Let $\{ \sigma_n; f_n, g_n \}$ denote the singular system of $H_T$ that we want to find. The problem can be formulated as finding a complete orthonormal system $\{ f_n \}_{n \in \mathbb{N}}$ in $\mathcal{L}^2(a_2,a_4)$ and an orthonormal system $\{ g_n \}_{n \in \mathbb{N}}$ in $\mathcal{L}^2(a_1,a_3)$ such that there exist real numbers $\sigma_n$ for which
\begin{align*}
H_T f_n &= \sigma_n g_n, \\
H_T^* g_n &=\sigma_n f_n.
\end{align*}
At the moment, the $g_n$'s only have to be complete in  $\text{Ran}(H_T)$, but as we will see in Section \ref{section:acc}, $\text{Ran}(H_T)$ is dense in $\mathcal{L}^2(a_1,a_3)$.
\\
As will be shown in Section \ref{section:svd}, the functions $f_n$ and $g_n$ 
\begin{enumerate}[(a)]
\item can only be bounded or of logarithmic singularity at the points $a_i$, \\
\item do not vanish at the edges of their supports ($a_2^+$, $a_4^-$ for $f_n$, and $a_1^+$, $a_3^-$ for $g_n$).
\end{enumerate}
We will now make use of the following results from \cite{gakhov}, Sections 8.2 and 8.5:
\\
\begin{lemma}[Local properties of the Hilbert transform]\label{propH}
Let $f$ be a function with support $[b,d] \subset \mathbb{R}$. And let $c$ be in the interior of $[b,d]$.
\begin{enumerate}
\item If $f$ is H\"{o}lder continuous (for some H\"{o}lder index $\alpha$) on $[b,d]$, then close to $b$ the Hilbert transform of $f$ is given by
\begin{equation}
(H f)(x) = - \frac{1}{\pi}f(b^+) \ln |x-b| + H_0(x)
\end{equation}
where $H_0$ is bounded and continuous in a neighborhood of $b$. \\
\item If in a neighborhood of $c$, the function $f$ is of the form $f(x) = \tilde{f}(x) \ln|x-c|$ for H\"{o}lder continuous $\tilde{f}$, then close to the point $c$ its Hilbert transform is of the form
\begin{equation*}
(Hf )(x) = H_0(x),
\end{equation*}
where $H_0$ is bounded with a possible finite jump discontinuity at $c$. \\
\item If $f$ is of the form $f(x) = \tilde{f}(x) \ln|x-b|$ on $[b,c]$, where $\tilde{f}$ is H\"{o}lder continuous, then its Hilbert transform at $b$ has a singularity of the order $\ln^2|x-b|$ if $\tilde{f}(b) \neq 0$. \\
\end{enumerate}
\end{lemma}
Suppose $f_n$ has a logarithmic singularity at $a_2^+$. Since $H_T$ integrates over $[a_2,a_4]$, the function $H_T f_n$ would have a singularity at $a_2$ of order $\ln^2 |x-a_2|$. Hence, this would violate the property of $g_n$ at $a_2$. Therefore, $f_n$ has to be bounded at $a_2^+$. If $f_n$ does not vanish at $a_2^+$, this leads to logarithmic singularities of $H_T f_n$ and $g_n$ at $a_2$. Using the same argument we conclude that $g_n$ is bounded at $a_3^-$ and $f_n$ has a logarithmic singularity at $a_3$.

On the other hand, since $g_n$ is bounded at $a_3^-$, $H_T f_n$ is also bounded there. This requires that close to $a_3$, $f_n = f_{n,1} + f_{n,2} \ln|x-a_3|$ for functions $f_{n,i}$ continuous at $a_3$. A similar argument holds for $g_n$ at $a_2$. Close to that point, $g_n = g_{n,1} + g_{n,2} \ln|x-a_2|$ for functions $g_{n,i}$ continuous at $a_2$. \\
Clearly, $H_T f_n$ is bounded at $a_1^+$ and $H_T g_n$ is bounded at $a_4^-$. Therefore, $f_n$ has to be bounded at $a_4^-$ and $g_n$ must be bounded at $a_1^+$.

Thus, if we want to show the commutation of $H_T$ with a differential operator that acts on $f_n(x)$, $x\in(a_2,a_4)$, we need to impose boundary conditions at $a_2^+$ and $a_4^-$ that require boundedness and some transmission conditions at $a_3$ that make the bounded term and the term in front of the logarithm in $f_n$ continuous at $a_3$.

\begin{figure}[ht!]
     \begin{center}

        \subfigure[Sketch of $f_n$'s.]{
            \label{fig:logf}
            \includegraphics[width=0.45\textwidth]{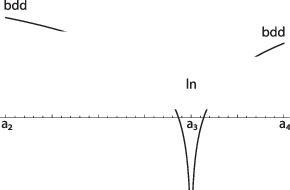}
        }
        \subfigure[Sketch of $g_n$'s.]{
           \label{fig:logg}
           \includegraphics[width=0.45\textwidth]{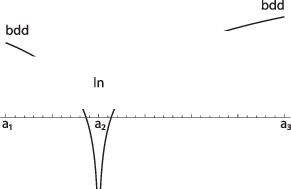}
        }
        
    \end{center}
     \caption{
      Intuition about the singular functions of $H_T$.
     }
   \label{fig:sketch}
\end{figure}

Having found these properties of the singular functions of $H_T$ (in case the SVD for $H_T$ exists), in Section \ref{do} we introduce a differential operator and find a self-adjoint extension for this operator. We then show in Section \ref{spec} that this self-adjoint differential operator $L_S$ has a discrete spectrum. In Section \ref{section:svd} we establish that $L_S$ commutes with the operator $H_T$. This allows us to find the SVD of $H_T$. In Section \ref{section:acc} we then study the accumulation points of the singular values of $H_T$. In particular, we find that $H_T$ is not a compact operator. Finally, we conclude by showing numerical examples in Section \ref{section:num}.

\section{Introducing a differential operator}\label{do}
In this section, we find two differential operators $L_S$ and $\tilde{L}_S$ that will turn out to have a commutation property of the form 
\begin{equation}\label{commutingoperators}
H_T L_S = \tilde{L}_S H_T.
\end{equation}
In order to find the SVD of $H_T$, we will be interested in finding $L_S$ and $\tilde{L}_S$ with simple discrete spectra. Initially, it is not apparent whether differential operators with such properties exist and if so, how to find them. We do not know of a coherent theory that relates certain integral operators to differential operators via a commutation property as the above. However, there have been examples of integral operators for which -- by what seems to be a lucky accident -- such differential operators exist. 

One instance is the well-known Landau-Pollak-Slepian (LPS) operator that arises in signal processing in the study of time- and bandlimited representations of signals \cite{slepian1961, landau1961, landau1962}. There, it is of interest to find the largest eigenvalue of the LPS operator $ \mathp_{[-T,T]} \mathcal{F}^{-1} \mathp_{[-W,W]} \mathcal{F} \mathp_{[-T,T]}$. Here, $\mathcal{F}$ is the Fourier transform, and $T$ and $W$ are some positive numbers. This operator happens to commute with a second order differential operator, of which the eigenfunctions and eigenvalues had been studied long before its connection to the LPS operator was known. The eigenfunctions of this differential operator are the so-called prolate spheroidal wave functions and they turn out to be the eigenfunctions of the LPS operator as well. The work of Landau, Pollak and Slepian has been generalized and extended by Gr\"{u}nbaum et al. \cite{grunbaum1982}.

More recent examples of integral operators with commuting differential operators are the interior Radon transform \cite{MaassInterior} and two instances of the truncated Hilbert transform mentioned earlier \cite{kat1,kat2}.

To start our search for $L_S$ and $\tilde{L}_S$, we follow the procedure in \cite{kat1, kat2} and define a differential operator 
\\
\begin{definition}
\begin{equation}
L(x,d_x) \psi(x):= (P(x)\psi'(x))'+2 (x-\sigma)^2 \psi(x)
\end{equation}
where
\begin{equation}
P(x) = \prod_{i=1}^4 (x-a_i), \quad  \sigma = \frac{1}{4}\sum_{i=1}^4 a_i.
\end{equation}
\end{definition}
The four points $a_i$ are all \textit{regular singular}, and in a complex neighborhood of each $a_i$ the functions $(x-a_i) \cdot P'(x)/P(x)$ and $(x-a_i)^2 \cdot 2 (x-\sigma)^2/P(x)$ are complex analytic. The term \textit{regular singular point} is standard in the general theory of differential equations and, as such, is also used in the theory of S-L equations, see e.g. \cite{teschl} for this and other terminology and basic properties of S-L equations. Consequently, by the method of Fuchs-Frobenius it follows that for $\lambda \in \mathbb{C}$ any solution of $L \psi = \lambda \psi$ is either bounded or of logarithmic singularity close to any of the points $a_i$, see \cite{teschl}. Away from the singular points $a_i$ the analyticity of the solutions follows from the analyticity of the coefficients of the differential operator $L$. More precisely, in a left and a right neighborhood of each regular singular point $a_i$, there exist two linearly independent solutions of the form
\begin{align}
\psi_1(x) &= |x-a_i|^{\alpha_1} \sum_{n=0}^{\infty} b_n (x-a_i)^n, \label{ff1} \\
\psi_2(x) &= |x-a_i|^{\alpha_2} \sum_{n=0}^{\infty} d_n (x-a_i)^n + k \ln |x-a_i| \psi_1(x), \label{ff2}
\end{align}
where without loss of generality we can assume $b_0=d_0=1$. The exponents $\alpha_1$ and $\alpha_2$ are the solutions of the indicial equation
\begin{equation*}
\alpha^2 + (p_0-1) \alpha + q_0 = 0,
\end{equation*}
where 
\begin{align}
p_0 &= \lim_{x \to a_i} (x-a_i) P'(x)/P(x), \\
q_0 &= \lim_{x \to a_i} (x-a_i)^2 [2 (x-\sigma)^2-\lambda]/P(x).
\end{align}
With our choice of $P$, this gives $\alpha_1 = \alpha_2 = 0$ which implies $k \neq 0$. For the bounded solution in \eqref{ff1}, $\alpha_1=0$ results in $\psi_1(a_i) \neq 0$. The radius of convergence of the series in \eqref{ff1} and \eqref{ff2} is the distance to the closest singular point different from $a_i$.  
In a left and in a right neighborhood of $a_i$, the general form of the solutions of $(L-\lambda) \psi = 0$ is
\begin{align}
\psi_1(x) &= \ell_{0} \sum_{n=0}^{\infty} b_n (x-a_i)^n \label{dof1} \\
\psi_2(x) &=  \ell_{1} \sum_{n=0}^{\infty} d_n (x-a_i)^n + \ell_{2} \ln |x-a_i| \sum_{n=0}^{\infty} b_n (x-a_i)^n \label{dof2}
\end{align}
for some constants $\ell_{j}$. Hence we have one degree of freedom for the bounded solution, and two -- for the unbounded solution. Clearly, for the bounded solutions \eqref{dof1}, the coefficients $b_n$ are the same on both sides of $a_i$, since we have assumed $b_0=1$. However, the bounded part of the unbounded solutions \eqref{dof2} may have different coefficients $d^-_n$ and $d^+_n$ to the left and to the right of $a_i$ respectively.

\subsection{The Maximal and Minimal Domains and Self-Adjoint Realizations}

Since we are interested in a differential operator that commutes (on some set to be defined) with $H_T$, we want to consider $L$ on the interval $(a_2,a_4)$. Due to the regular singular point $a_3$ in the \textit{interior} of the interval, standard techniques for singular S-L problems are not applicable. It is crucial for our application that we identify a commuting \textit{self-adjoint} operator, for which the spectral theorem can be applied. We therefore wish to study all self-adjoint realizations; we follow the treatment in Chapter 13 in \cite{zettl} which gives a characterization of all self-adjoint realizations for \textit{two-interval problems}, of which problems with an interior singular point are a special case.

First of all, one needs to define the maximal and minimal domains on $I_j=(a_j,a_{j+1})$ (see Chapter 9 in \cite{zettl}). Let $AC_{loc}(I)$ be the set of all functions that are absolutely continuous on all compact subintervals of the open interval $I$. Then, 
\begin{align}
D_{j,\max} &:= \{ \psi: I_j \rightarrow \mathbb{C}: \psi, P \psi' \in AC_{loc}(I_j); \psi, L \psi \in \mathcal{L}^2(I_j) \}, \\
D_{j,\min} &:= \{ \psi \in D_{j,\max}: \text{supp }\psi\subset (a_j,a_{j+1}) \},
\end{align}
and the related maximal and minimal operators are defined as follows:
\begin{align}
L_{j,\max} &:= L (D_{j,\max}): D_{j,\max} \rightarrow \mathcal{L}^2(I_j), \\
L_{j,\min} &:= L (D_{j,\min}): D_{j,\min} \rightarrow \mathcal{L}^2(I_j).
\end{align}
We shall follow essentially the procedure in Chapter 13 in \cite{zettl}, to which we refer for more details. On $(a_2,a_4)$, the maximal and minimal domains and the corresponding operators are defined as the direct sums:
\\
\begin{definition}
The maximal and minimal domains $D_{\max}$, $D_{\min} \subset \mathcal{L}^2(a_2,a_4)$ and the operators $L_{\max}$, $L_{\min}$ are defined as
\begin{align*}
D_{\max} &:= D_{2,\max} + D_{3,\max}, & D_{\min} := D_{2,\min} + D_{3,\min} \\
L_{\max} &:= L_{2,\max} + L_{3,\max}, & L_{\min} := L_{2,\min} + L_{3,\min}
\end{align*}
and therefore
\begin{align}
L_{\max} &: D_{\max} \rightarrow \mathcal{L}^2(a_2,a_4), \\
L_{\min} &: D_{\min} \rightarrow \mathcal{L}^2(a_2,a_4).
\end{align}
\end{definition}
The operator $L_{\min}$ is a closed, symmetric, densely defined operator in $\mathcal{L}^2(a_2,a_4)$ and $L_{\max}$, $L_{\min}$ form an adjoint pair, i.e. $L_{\max}^* = L_{\min}$ and $L_{\min}^* = L_{\max}$.
In order to define a self-adjoint extension of $L_{\min}$, we need to introduce the notion of the Lagrange sesquilinear form:
\begin{equation}
[u,v] := uP\overline{v}'-\overline{v}Pu',
\end{equation}
where, at the singular points,
\begin{align}
[u,v](a_i^+) &:= \lim_{\alpha \to a_i^+} [u,v](\alpha), \\
[u,v](a_i^-) &:= \lim_{\alpha \to a_i^-} [u,v](\alpha).
\end{align}
These limits exist and are finite for all $u$, $v \in D_{max}$. If we choose $u$, $v \in D_{\max}$ such that $[u,v](a_i) = 1$ for all the singular points ($a_2^+$, $a_3^-$, $a_3^+$, $a_4^-$), then the extension of $L_{\min}$ defined by the following conditions 
\begin{align}
[\psi,u](a_2^+) &=0 = [\psi,u](a_4^-) \label{sa1} \\
[\psi,u](a_3^-) &= [\psi,u](a_3^+) \label{sa2} \\
[\psi,v](a_3^-) &= [\psi,v](a_3^+) \label{sa3}
\end{align}
is self-adjoint. We refer to \eqref{sa1} as boundary conditions, and to \eqref{sa2} and \eqref{sa3} -- as transmission conditions. The latter connect the two subintervals $(a_2,a_3)$ and $(a_3,a_4)$. Motivated by the conditions mentioned in Section \ref{intro}, we define a self-adjoint extension of $L_{\min}$:
\\
\begin{lemma}\label{lemma1}
The extension $L_S: D(L_S) \rightarrow \mathcal{L}^2(a_2,a_4)$ of $L_{\min}$ to the domain 
\begin{align}
D(L_S) := \{ \psi \in D_{\max}: [\psi,u](a_2^+) &= [\psi,u](a_4^-) = 0, \nonumber \\
[\psi,u](a_3^-) &= [\psi,u](a_3^+), [\psi,v](a_3^-) = [\psi,v](a_3^+) \}
\end{align}
with the following choice of maximal domain functions $u, v \in D_{\max}$
\begin{align}
u(y) &:= 1, \label{u} \\
v(y) &:= \sum_{i=1}^4 \prod_{\substack{j \neq i \\ j \in \{1, \dots, 4 \}}} \frac{1}{a_i-a_j} \ln|y-a_i|, \label{v}
\end{align} 
is self-adjoint.
\end{lemma}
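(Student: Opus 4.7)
The plan is to verify the two hypotheses of the self-adjointness criterion stated immediately above the lemma: (i) that the proposed $u$ and $v$ both lie in the maximal domain $D_{\max}$, and (ii) that the Lagrange form $[u,v]$ equals $1$ at each of the four singular endpoints $a_2^+$, $a_3^-$, $a_3^+$, $a_4^-$. Once both are checked, the general criterion from Zettl's two-interval theory delivers self-adjointness of $L_S$.

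The verification $u \in D_{\max}$ is trivial because $u \equiv 1$: both $u$ and $Pu' = 0$ are absolutely continuous, $u \in \mathcal{L}^2(a_2,a_4)$, and $Lu = 2(x-\sigma)^2$ is bounded on the compact interval and therefore lies in $\mathcal{L}^2$.

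The main computation is for $v$. I will differentiate termwise and rewrite over a common denominator to obtain $P(y) v'(y) = \sum_{i=1}^4 \prod_{j \neq i} \frac{y-a_j}{a_i-a_j}$. The key observation is that the right-hand side is the Lagrange interpolation polynomial of degree at most $3$ taking the value $1$ at each of the four points $a_1, a_2, a_3, a_4$, so by uniqueness of polynomial interpolation it equals the constant $1$ identically. This single identity $Pv' \equiv 1$ handles everything remaining: $Pv'$ is automatically in $AC_{loc}$, the singularities of $v$ are only logarithmic and hence square-integrable, and $Lv = (Pv')' + 2(x-\sigma)^2 v = 2(x-\sigma)^2 v \in \mathcal{L}^2$. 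Thus $v \in D_{\max}$.

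With $Pv' \equiv 1$ in hand, the Lagrange form collapses: $[u,v] = u P \bar{v}' - \bar{v} P u' = Pv' = 1$ identically on each subinterval, so the four required limits at $a_2^+$, $a_3^-$, $a_3^+$, $a_4^-$ all equal $1$. The only non-routine step in this plan is the Lagrange-interpolation recognition of $Pv'$; after that the verification is pure bookkeeping, and the cited general criterion then yields self-adjointness of the extension defined by the boundary and transmission conditions in $D(L_S)$.
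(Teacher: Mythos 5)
Your proposal is correct and follows the same route as the paper: the paper also invokes the Zettl two-interval criterion stated just before the lemma and simply asserts that this choice of $u,v$ gives $[u,v](a_i)=1$, which is exactly the identity $Pv'\equiv 1$ you establish via Lagrange interpolation. Your write-up just makes explicit the computation the paper leaves to the reader.
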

\\

This choice of maximal domain functions gives $[u,v](a_i)=1$ for $i=1,\dots, 4$. The boundary conditions simplify to
 \begin{equation}\label{bc}
\lim_{y \to a_2^+} P(y) \psi'(y) = \lim_{y \to a_4^-} P(y) \psi'(y) = 0.
\end{equation}
For an eigenfunction $\psi$ of $L_S$ this is equivalent to $\psi$ being bounded at $a_2^+$ and $a_4^-$ (because the only possible singularity is of logarithmic type). Let $\phi_1$ and $\phi_2$ be the restrictions of $\psi$ to the intervals $(a_2,a_3)$ and $(a_3,a_4)$, respectively. Since $\psi$ is an eigenfunction, on the corresponding intervals $\phi_1$ and $\phi_2$ are of the form $\phi_i(y) = \phi_{i1} (y) + \phi_{i2} (y) \ln|y-a_3|$. Here, the functions $\phi_{ij}$ are analytic on $(a_2,a_3)$ for $i=1$ and on $(a_3,a_4)$ -- for $i=2$.
Having this, the transmission conditions can be simplified as follows:
\begin{align}
[\psi,u](a_3^+) &= [\psi,u](a_3^-) \nonumber \\ 
\lim_{y \to a_3^+} P(y) \psi'(y) &= \lim_{y \to a_3^-} P(y) \psi'(y) \nonumber \\
\lim_{y \to a_3^-} \phi_{12} (y) &= \lim_{y \to a_3^+} \phi_{22} (y) \label{tc2}.
\end{align}
The condition involving $v$ yields
\begin{align}
[\psi,v](a_3^-)&=[\psi,v](a_3^+) \nonumber  \\
 \lim_{y \to a_3^-}[\psi(y) - v(y) (P \psi')(y)] & =  \lim_{y \to a_3^+}[\psi(y) - v(y) (P \psi')(y)] \label{tc11}\\
\lim_{y \to a_3^-}\phi_{11}(y) &= \lim_{y \to a_3^+}\phi_{21}(y)  \label{tc1}
\end{align}
Note that on each side of \eqref{tc11} the logarithmic terms in $\phi_{i2}$ cancel because of the choice of the constants in $v$. The properties \eqref{bc}, \eqref{tc2} and \eqref{tc1} are the same as the ones found for $f_n$ in Section \ref{intro}. Thus, we have constructed an operator $L_S$ for which close to the points $a_2$, $a_3$ and $a_4$, the eigenfunctions behave in the same way that is expected for the $f_n$'s. 

Close to $a_3$, an eigenfunction $\psi$ is given by
\begin{equation}
\psi(y) = 
\left\{
	\begin{array}{ll}
		 \ell_{11} \sum\limits_{m=0}^{\infty} d^-_m (y-a_3)^m + \ell_{21} \ln|y-a_3| \sum\limits_{m=0}^{\infty} b_m (y-a_3)^m, & y< a_3 \\
		\ell_{12} \sum\limits_{m=0}^{\infty} d^+_m (y-a_3)^m + \ell_{22} \ln|y-a_3| \sum\limits_{m=0}^{\infty} b_m (y-a_3)^m, & y > a_3
	\end{array}
\right. %
\end{equation}
where similarly to \eqref{ff2}, we assume $d_0^-=d_0^+=1$ and $b_0=1$.
The transmission conditions require that
\begin{align}
\ell_{11} &= \ell_{12}, \label{elltc1} \\
\ell_{21} &= \ell_{22}. \label{elltc2}
\end{align}
We can thus express $\psi$ in a sufficiently small neighborhood of $a_3$ as 
\begin{equation}\label{arounda3}
\psi(y) = \ell_{11} + \ell_{21} \ln|y-a_3| \sum_{m=0}^{\infty} b_m (y-a_3)^m + \sum_{m=1}^{\infty} \ell^{\pm}_m (y-a_3)^{m},
\end{equation}
where $\ell^{\pm}_m$ stands for $\ell^+_m = \ell_{11} d_m^+$, when $y >a_3$ and for $\ell^-_m = \ell_{11} d_m^-$, when $y <a_3$.

\section{The spectrum of $L_S$}\label{spec}
In order to prove that the spectrum of the differential self-adjoint operator $L_S$ introduced in Lemma~\ref{lemma1} is discrete, we need to show that for some $z$ in the resolvent set,  $(L_S-zI)^{-1}$ is a compact operator. To do so, it is sufficient to prove that the Green's function $G$ of $L_S-zI$, which for $z$ in the resolvent set exists and is unique, is a function in $\mathcal{L}^2((a_2,a_4)^2)$. This would allow us to conclude that the integral operator $T_G$ with $G$ as its integral kernel is a compact operator from $\mathcal{L}^2(a_2,a_4)$ to $\mathcal{L}^2(a_2,a_4)$, where $T_G$ is equivalent to the inversion of $L_S-zI$. \\ 
\begin{lemma}
The Green's function $G(x,\xi)$ associated with $L_S - i$ is in $\mathcal{L}^2((a_2,a_4)^2)$ and consequently, $(L_S-i)^{-1}: \mathcal{L}^2(a_2,a_4) \rightarrow D(L_S) \subset \mathcal{L}^2(a_2,a_4)$ is a compact operator.
\end{lemma}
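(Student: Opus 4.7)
The plan is to construct the Green's function $G$ explicitly from two distinguished solutions of $(L-i)\psi=0$ on $(a_2,a_4)$, and then use the Frobenius expansions of Section~\ref{do} to show that $G$ is Hilbert--Schmidt on $(a_2,a_4)^2$.

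First, I would build two solutions $\psi_-,\psi_+$ of $(L-i)\psi=0$ that each satisfy part of the conditions defining $D(L_S)$. Let $\psi_-$ restricted to $(a_2,a_3)$ be the Frobenius solution \eqref{dof1} that is bounded at $a_2$ (so \eqref{bc} holds at $a_2^+$), which is unique up to scale; extend $\psi_-$ to $(a_3,a_4)$ as the unique solution whose coefficients $\ell_{11},\ell_{21}$ in the expansion \eqref{arounda3} at $a_3^+$ match those read off at $a_3^-$, so that the transmission conditions \eqref{elltc1}--\eqref{elltc2} hold. Construct $\psi_+$ symmetrically, starting from the bounded solution at $a_4$. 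Then $\psi_-$ and $\psi_+$ are linearly independent: otherwise a scalar multiple of $\psi_-$ would be bounded at both $a_2$ and $a_4$ and transmit correctly across $a_3$, hence lie in $D(L_S)$ and produce a nonzero kernel element for $L_S-i$, contradicting self-adjointness (which forces $i\in\rho(L_S)$).

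With this in hand, set $W:=P(\psi_-\psi_+'-\psi_+\psi_-')$ and define
\begin{equation*}
G(x,\xi):=\frac{1}{W}\begin{cases}\psi_-(x)\psi_+(\xi), & a_2<x\le\xi<a_4,\\ \psi_+(x)\psi_-(\xi), & a_2<\xi\le x<a_4.\end{cases}
\end{equation*}
Since $L$ is in formally symmetric form, $W$ is constant on each of $(a_2,a_3)$ and $(a_3,a_4)$, and the transmission conditions force these two constants to agree, so $W$ is a nonzero constant on $(a_2,a_4)$. The crux of the argument is square-integrability: by the Fuchs--Frobenius structure \eqref{dof1}--\eqref{dof2}, each of $\psi_\pm$ has at worst a logarithmic singularity at the points $a_2,a_3,a_4$ and is analytic elsewhere on $(a_2,a_4)$. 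Since $\log^2|x-a_i|$ is locally integrable, $\psi_\pm\in L^2(a_2,a_4)$ with locally bounded $L^2$-norm near each singular point; hence $|\psi_-(x)\psi_+(\xi)|^2$ is integrable on $(a_2,a_4)^2$. Therefore $G\in\mathcal{L}^2((a_2,a_4)^2)$ and $T_G$ is a Hilbert--Schmidt integral operator, in particular compact.

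It remains to identify $T_G=(L_S-i)^{-1}$, which follows by the standard computation: differentiating $T_Gf(x)=\int G(x,\xi)f(\xi)\,d\xi$ in $x$, the prescribed jump $[\partial_xG](x,x)=1/P(x)$ built into the Wronskian normalization yields $(L-i)T_Gf=f$, and the boundary and transmission behavior of $T_Gf$ is inherited from $\psi_\pm$, so $T_Gf\in D(L_S)$. The main obstacle is the delicate Frobenius matching at the interior singular point $a_3$: one must verify that the prescription above genuinely produces solutions of $(L-i)\psi=0$ on all of $(a_2,a_4)$ satisfying \eqref{tc2}--\eqref{tc1}, and that the Wronskian is indeed constant through $a_3$. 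Once this bookkeeping is settled, the $L^2$-bound on $G$ is an immediate consequence of the local $L^2$-integrability of $\log^2|x-a_i|$.
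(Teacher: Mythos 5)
Your construction is essentially the paper's: the authors likewise build one solution bounded at $a_2$ and one bounded at $a_4$, each satisfying the transmission conditions at $a_3$, rule out their degeneracy by the same ``otherwise $i$ would be an eigenvalue of $L_S$'' contradiction, and deduce square-integrability of $G$ from the at-worst-logarithmic singularities. Your one streamlining --- observing that $W=P(\psi_-\psi_+'-\psi_+\psi_-')$ is a single nonzero constant across $a_3$ because the transmission conditions match the leading Frobenius coefficients --- replaces the paper's separate asymptotic estimates of $c_1(\xi)$, $c_2(\xi)$ near $a_2$, $a_3$ and $a_4$ (their condition $r_1(0)\neq 0$ is precisely the nonvanishing of this constant read off at $a_3$), but the mathematical content is the same.
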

\begin{proof}
The self-adjointness of $L_S$ is equivalent to $L_S-i$ being one-to-one and onto (Theorem VIII.3 in \cite{reed}). Moreover, the $a_i$'s are limit-circle points and thus, the deficiency index $d$ equals $4$ (Theorem 13.3.1 in \cite{zettl}). This means that if we do not impose boundary and transmission conditions, there are two linearly independent solutions $p_1$ and $p_2$ of $(L-i) p =0$ on $(a_2,a_3)$ as well as two linearly independent solutions $q_1$ and $q_2$ of $(L-i) q = 0$ on $(a_3,a_4)$. Note that none of these four solutions can be bounded at both of its endpoints because $i$ is not an eigenvalue of the self-adjoint operator $L_{j,S} : D(L_{j,S}) \rightarrow \mathcal{L}^2(I_j)$ with $D(L_{j,S}) = \{ \psi \in D_{j,\max}: \lim_{y \to a_j^+} P(y) \psi'(y) = \lim_{y \to a_{j+1}^-} P(y) \psi'(y) = 0\}$. By taking appropriate combinations, if necessary, we can eliminate the logarithmic singularity at $a_2^+$ of one of the solutions, and at $a_4^-$ -- of another solution. We can thus assume that 
\begin{itemize}
\item on $(a_2,a_3)$: $p_1$ is bounded at $a_2^+$ and logarithmic at $a_3^-$, $p_2$ is logarithmic at both endpoints; \\
\item on $(a_3,a_4)$: $q_1$ is logarithmic at $a_3^+$ and bounded at $a_4^-$, $q_2$ is logarithmic at both endpoints.
\end{itemize}
We next check the restrictions imposed by the transmission conditions at $a_3$. Close to $a_3$, both functions $p_1$ and $q_2$ are of the form \eqref{dof2}. Let $\ell_{11}$, $\ell_{21}$ denote the free parameters in the expression for $p_1$ and $\ell_{12}$, $\ell_{22}$ the ones in $q_2$. These can be chosen such that they satisfy \eqref{elltc1} and \eqref{elltc2}. Thus, there exists a solution $h_1(x)$ on $(a_2,a_4)$ given by
\begin{equation*}
h_1(x) = 
\left\{
	\begin{array}{ll}
		p_1(x) & \mbox{for } x \in (a_2,a_3) \\
		q_2(x) & \mbox{for } x \in (a_3,a_4)
	\end{array}
\right.
\end{equation*}
that is bounded at $a_2^+$ and logarithmic at $a_4^-$. In addition, it is of the form \eqref{arounda3} close to $a_3$, i.e. it is logarithmic at $a_3$ and satisfies the transmission conditions \eqref{tc2}, \eqref{tc1} there. Similarly, with $p_2$ and $q_1$ we can obtain a solution $h_2$ on $(a_2,a_4)$ that satisfies the transmission conditions at $a_3$ and is of $\ln$-$\ln$-bounded-type.
Thus, imposing only the transmission conditions, we obtain two linearly independent solutions of $(L-i) h = 0$ on $(a_2,a_3) \cup (a_3,a_4)$. One of them, $h_1$, is of a bounded-$\ln$-$\ln$-type, and the other one, $h_2$, is of a $\ln$-$\ln$-bounded-type, at the points $a_2^+$, $a_3$, $a_4^-$, respectively. We are now in a position to consider the Green's  function $G(x,\xi)$ of $L_S-i$. Close to $a_3$, we can write the two functions as $h_j(x) = h_{j1}(x) + \ln|x-a_3| h_{j2}(x)$ with continuous functions $h_{j1}$ and $h_{j2}$. By rescaling if necessary, we can assume $h_{12}(a_3) = h_{22}(a_3)$. We construct $G$ from $h_1$ and $h_2$ as follows:
\begin{equation}
G(x,\xi) =
			\left\{
	\begin{array}{ll}
		c_1(\xi) h_1(x)  & \mbox{for } x < \xi \\
		c_2(\xi) h_2(x) & \mbox{for } x > \xi
	\end{array}
\right.
\end{equation}
where $\xi \in (a_2,a_3) \cup (a_3,a_4)$ and the functions $c_1(\xi)$ and $c_2(\xi)$ are chosen such that $G$ is continuous at $x=\xi$ and $\partial G/\partial x$ has a jump discontinuity of $1/P(\xi)$ at $x=\xi$:
\begin{align}
c_1(\xi) h_1(\xi) - c_2(\xi) h_2(\xi) &= 0, \label{G1} \\
c_1(\xi) h_1'(\xi) - c_2(\xi) h_2'(\xi) &= - \frac{1}{P(\xi)} \label{G2}.
\end{align}
In other words, $G$ is the solution of $(L-i) G = \delta$, where $\delta$ is the Dirac delta function. 
For $\xi$ away from $a_3$, $G(x,\xi)$ is continuous in $\xi$ but with logarithmic singularities at $a_2^+$ and $a_4^-$. This can be seen as follows. Consider $\xi$ close to $a_2$. There, we can write 
\begin{equation*}
h_2(\xi) = \tilde{h}_{21}(\xi) + \tilde{h}_{22}(\xi) \ln|\xi-a_2|
\end{equation*}
and, since $h_1$ is bounded close to $a_2^+$, it is of the form \eqref{dof1}, i.e. $h_1(a_2^+) \neq 0$. Let $W_{h_1,h_2}$ denote the Wronskian of $h_1$ and $h_2$, i.e. $W_{h_1,h_2} = h_1 h_2'-h_1' h_2$. For $c_1$ and $c_2$ we obtain
\begin{align}
c_1(\xi) &= \frac{h_2(\xi)}{P(\xi) W_{h_1,h_2}(\xi)}, \\
c_2(\xi) &= \frac{h_1(\xi)}{P(\xi) W_{h_1,h_2}(\xi)}.
\end{align}
The denominator in the above expressions is bounded by
\begin{align*}
P(\xi) (h_1(\xi) h_2'(\xi) - h_2(\xi) h_1'(\xi)) &= \mathcal{O}((\xi-a_2)\ln|\xi-a_2|) + h_1(\xi) \tilde{h}_{22}(\xi) p(\xi), 
\end{align*}
where $p(\xi) = P(\xi)/(\xi-a_2)$ and $h_1(a_2^+) \tilde{h}_{22}(a_2) p(a_2) \neq 0$. Thus, in a neighborhood of $a_2^+$,
\begin{align}
c_1(\xi) &= \mathcal{O}(\ln|\xi-a_2|), \\
c_2(\xi) &= \mathcal{O}(1).
\end{align}
Similarly, since $h_2(a_4^-) \neq 0$, close to $a_4^-$
\begin{align}
c_1(\xi) &= \mathcal{O}(1), \\
c_2(\xi) &= \mathcal{O}(\ln|\xi-a_4|).
\end{align}
For each fixed $\xi \in (a_2,a_3) \cup (a_3,a_4)$, $G(x,\xi)$ as a function in $x$ is continuous on $[a_2,a_3) \cup (a_3,a_4]$ and has a logarithmic singularity at $a_3$, due to the singularities in $h_1(x)$ and $h_2(x)$. It remains to check what happens as $\xi \to a_3$. 
We need to make sure that the functions $c_1(\xi)$ and $c_2(\xi)$ behave in such a way that $G \in \mathcal{L}^2((a_2,a_4)^2)$. Therefore, we derive the asymptotics of $c_1(\xi)$ and $c_2(\xi)$ as $\xi \to a_3^-$. For $\xi = a_3 - \epsilon$ and small $\epsilon > 0$, equation \eqref{G1} becomes
\begin{equation*}
c_1(a_3-\epsilon) h_1(a_3-\epsilon) - c_2(a_3-\epsilon) h_2 (a_3-\epsilon) = 0.
\end{equation*}
Since close to $a_3$, $h_i = h_{i1} + h_{i2} \ln(\epsilon)$ and the $h_{ij}$ are continuous, the ratio $c_1/c_2$ is of the form
\begin{equation*}
\frac{a+b \ln(\epsilon)}{c+d \ln(\epsilon)},
\end{equation*}
where $b$ and $d$ are non-zero (because the logarithmic singularity is present). Thus, the ratio tends to the finite limit $b/d$ as $\epsilon \to 0$. Conditions \eqref{G1} and \eqref{G2} together imply:
\begin{align*}
c_2(a_3-\epsilon) &= \frac{h_1(a_3-\epsilon) }{P(a_3-\epsilon) W_{h_1,h_2}(a_3-\epsilon)} \\
&= \frac{h_{11}(a_3-\epsilon) + h_{12}(a_3-\epsilon) \ln(\epsilon) }{r_1(\epsilon) + \epsilon \cdot r_2(\epsilon)}
\end{align*}
where 
\begin{align*}
r_1(\epsilon) &=  - \frac{1}{\epsilon} P(a_3-\epsilon) \big(h_{21} h_{12} - h_{22} h_{11}\big)(a_3-\epsilon), \\
r_2(\epsilon) &= \mathcal{O}(1),
\end{align*}
and $h_{12}(a_3) \neq 0$. If  $r_1(0) \neq 0$, then $c_2$ is of order $\mathcal{O}(\ln(\epsilon))$, removing a possible obstruction to square integrability of $G$.

Suppose $r_1(0) = 0$, i.e.
\begin{equation*}
h_{21}(a_3) h_{12} (a_3)-h_{22}(a_3) h_{11}(a_3) = 0.
\end{equation*}
This would imply 
\begin{align}
h_{11}(a_3) &= C \cdot h_{21}(a_3), \label{ld1} \\
h_{12}(a_3) &= C \cdot h_{22}(a_3) \label{ld2}
\end{align}
for some constant $C$. By assumption, $h_{12}(a_3) = h_{22}(a_3)$, so that $C=1$. Now if both \eqref{ld1} and \eqref{ld2} hold for $C=1$, the function defined by
\begin{equation*}
h(x) =
\left\{
	\begin{array}{ll}
		h_1(x)  & \mbox{for } x \in (a_2,a_3) \\
		h_2(x) & \mbox{for } x \in (a_3,a_4)
	\end{array}
\right.
\end{equation*}
would be a non-trivial solution of $(L_S-i)h =0$ (fulfilling both boundary and transmission conditions), i.e. $i$ would be an eigenvalue of $L_S$. But this contradicts the self-adjointness of $L_S$. We can thus conclude that $r_1(0) \neq 0$.
This shows that $c_2(a_3-\epsilon) $ is of order $\mathcal{O}(\ln(\epsilon))$ and therefore also $c_2 \cdot \frac{c_1}{c_2} = c_1 = \mathcal{O}(\ln(\epsilon))$. \\
Analogously, we can find the same asymptotics of $c_1(\xi)$ and $c_2(\xi)$ as $\xi \to a_3^+$. \\
Therefore, the properties of the Green's function $G(x,\xi)$ can be summarized as follows:
\begin{itemize}
\item $G( \cdot, \xi)$ has logarithmic singularities at $a_2^+$, $a_3$ and $a_4^-$, \\
\item $G(x, \cdot)$ is of logarithmic singularity at $a_3$, \\
\item away from these singularities $G(x,\xi)$ is continuous in $x$ and $\xi$.
\end{itemize}
Thus,  $G$ is in $\mathcal{L}^2((a_2,a_4)^2)$. Hence, $T_G:\,\mathcal{L}^2(a_2,a_4)\to \mathcal{L}^2(a_2,a_4)$ is a compact Fredholm integral operator. \qquad
\end{proof}
\\

From this we conclude: \\
\begin{proposition}
The operator $L_S$ has only a discrete spectrum, and the associated eigenfunctions are complete in $\mathcal{L}^2(a_2,a_4)$.
\end{proposition}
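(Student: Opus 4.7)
My plan is to combine the compact resolvent conclusion of the preceding lemma with the self-adjointness of $L_S$ established in Lemma~\ref{lemma1} and invoke the spectral theorem for compact normal operators.

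Let $R := (L_S - iI)^{-1}$. Since $L_S$ is self-adjoint, $L_S - iI : D(L_S) \to \mathcal{L}^2(a_2,a_4)$ is a bijection (Theorem VIII.3 in \cite{reed}), so $R$ is a bounded everywhere-defined operator on $\mathcal{L}^2(a_2,a_4)$ with trivial kernel. Using $L_S^* = L_S$ and $(A^{-1})^* = (A^*)^{-1}$, one has $R^* = (L_S + iI)^{-1}$. Since $L_S - iI$ and $L_S + iI$ commute on $D(L_S^2)$, their inverses commute, so $R R^* = R^* R$, i.e.\ $R$ is normal. By the previous lemma, $R$ is also compact.

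The spectral theorem for compact normal operators then yields a countable orthonormal basis $\{\psi_n\}_{n \in \mathbb{N}}$ of $\mathcal{L}^2(a_2,a_4)$ consisting of eigenvectors of $R$, with eigenvalues $\mu_n \in \mathbb{C}$ of finite multiplicity whose only possible accumulation point is $0$. Because $\ker R = \{0\}$, every $\mu_n$ is nonzero, so $\mu_n \to 0$. Applying $L_S - iI$ to $R\psi_n = \mu_n \psi_n$ gives $L_S \psi_n = \lambda_n \psi_n$ with $\lambda_n := i + 1/\mu_n$. Self-adjointness of $L_S$ forces each $\lambda_n$ to be real, and $\mu_n \to 0$ forces $|\lambda_n| \to \infty$.

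Therefore $\sigma(L_S) = \{\lambda_n\}_{n \in \mathbb{N}}$ consists of isolated real eigenvalues of finite multiplicity with no finite accumulation point, which is precisely the definition of a discrete spectrum; the corresponding eigenfunctions $\{\psi_n\}$ are exactly the basis produced above and thus are complete in $\mathcal{L}^2(a_2,a_4)$. I do not expect any genuine obstacle here: the substantive work was carried out in the compactness lemma, and the remaining argument is a routine application of the spectral theorem, the only point requiring a brief verification being that $R$ is normal.
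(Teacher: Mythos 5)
Your proposal is correct and follows essentially the same route as the paper: invoke self-adjointness to get bijectivity of $L_S - i$, use the compactness of the resolvent from the preceding lemma, apply the spectral theorem for compact normal operators, and transfer the resulting orthonormal eigenbasis back to $L_S$ via $\lambda_n = i + 1/\mu_n$. The only difference is that you explicitly verify normality of the resolvent, which the paper simply asserts.
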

\\
\begin{proof}
By Theorem VIII.3 in \cite{reed}, the self-adjointness of $L_S$ implies that for the operator $(L_S-i): D(L_S) \rightarrow \mathcal{L}^2(a_2,a_4)$ we have
\begin{align}
\text{Ker}(L_S-i) &= \{ 0\}, \\
\text{Ran}(L_S-i) &= \mathcal{L}^2(a_2,a_4).
\end{align}
Consequently, $(L_S-i)^{-1}:\,\mathcal{L}^2(a_2,a_4)\to D(L_S)$ is one-to-one and onto. Moreover, it is a normal compact operator and thus we get the spectral representation
\begin{equation}
(L_S-i)^{-1} f = \sum_{n=0}^{\infty} \lambda_n \langle f, f_n \rangle f_n,
\end{equation}
where $\{ f_n \}_{n \in \mathbb{N}}$ is a complete orthonormal system in $\mathcal{L}^2(a_2,a_4)$. This can be transformed into the spectral representation for $L_S$:
\begin{align}
L_S f &= \sum_{n=0}^{\infty} (\frac{1}{\lambda_n} + i) \langle f, f_n \rangle f_n.  \qquad
\end{align}
\end{proof}

Clearly, the eigenfunctions $f_n$ of $L_S$ can be chosen to be real-valued. 
The completeness of $\{ f_n \}_{n \in \mathbb{N}}$ is essential for finding the SVD of $H_T$. Another property that will be needed for the SVD is that the spectrum of $L_S$ is simple, i.e. that each eigenvalue has multiplicity $1$. 
\\
\begin{proposition}
The spectrum of $L_S$ is simple.
\end{proposition}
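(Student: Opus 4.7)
The plan is to show that any two nonzero eigenfunctions $\psi_1,\psi_2\in D(L_S)$ sharing a common eigenvalue $\lambda$ must be proportional. The strategy exploits the fact that the bounded-at-$a_2^+$ branch of the local solution space of $(L-\lambda)u=0$ on $(a_2,a_3)$ is only one-dimensional, and uses the transmission conditions at $a_3$ to propagate vanishing across the interior singular point.

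First I would restrict $\psi_1$ and $\psi_2$ to $(a_2,a_3)$. By \eqref{bc} both restrictions are bounded at $a_2^+$, and the Fuchs-Frobenius representation \eqref{dof1} (with $\alpha_1=0$, $b_0=1$) shows that the space of bounded-at-$a_2^+$ solutions of $(L-\lambda)u=0$ on $(a_2,a_3)$ is one-dimensional. Hence $\psi_1|_{(a_2,a_3)}$ and $\psi_2|_{(a_2,a_3)}$ are proportional, and one can pick constants $c_1,c_2$, not both zero, for which
\begin{equation*}
\psi:=c_1\psi_1-c_2\psi_2\in D(L_S),\qquad L_S\psi=\lambda\psi,
\end{equation*}
vanishes identically on $(a_2,a_3)$.

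The main step is to propagate this vanishing across $a_3$. Because $\psi\equiv 0$ on a left-neighborhood of $a_3$, its expansion \eqref{arounda3} forces $\ell_{11}=\ell_{21}=0$ (otherwise either the constant term or the logarithmic term on the left would be nonzero). The transmission conditions \eqref{elltc1}--\eqref{elltc2} have already been absorbed into \eqref{arounda3}---the same $\ell_{11}$ and $\ell_{21}$ govern both one-sided expansions---so the right-hand expansion inherits these zero values and $\ell^+_m=\ell_{11}\,d_m^+=0$ for every $m\ge 1$. Therefore $\psi$ vanishes on a right-neighborhood $(a_3,a_3+\varepsilon)$. Since the coefficients of $(L-\lambda)u=0$ are analytic on the open interval $(a_3,a_4)$, uniqueness for the Cauchy problem at any point of $(a_3,a_3+\varepsilon)$ extends the vanishing to all of $(a_3,a_4)$.

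Combining the two subintervals, $\psi\equiv 0$ on $(a_2,a_4)$, i.e.\ $c_1\psi_1=c_2\psi_2$. Neither $c_i$ can vanish: if, say, $c_1=0$ then $c_2\neq 0$ and $\psi_2|_{(a_2,a_3)}=0$, and applying the same propagation argument to $\psi_2$ itself would force $\psi_2\equiv 0$, contradicting $\psi_2$ being an eigenfunction; similarly for $c_2$. Hence $\psi_1$ and $\psi_2$ are linearly dependent, so every eigenvalue of $L_S$ is simple. The only nontrivial obstacle is the propagation across the interior singular point $a_3$; it works precisely because the two transmission conditions eliminate the two extra degrees of freedom that would otherwise let the right-hand piece of $\psi$ be nonzero while the left-hand piece vanishes.
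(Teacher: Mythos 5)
Your proposal is correct, and it takes a genuinely different route from the paper's proof. The paper runs the classical Sturm--Liouville argument: for two eigenfunctions with a common eigenvalue it shows the Lagrange bracket $[f_1,f_2]$ is constant on each subinterval, annihilates it using the boundary conditions at \emph{both} $a_2$ and $a_4$, extracts from the vanishing Wronskian the proportionality of the Cauchy data $\bigl(f_{i1}(a_3),f_{i2}(a_3)\bigr)$ at the interior singular point (which requires checking that the $\ln|x-a_3|/(x-a_3)$ terms cancel in \eqref{f1f2}), and then invokes uniqueness for the singular initial value problem at $a_3$ (Theorem 8.4.1 in \cite{zettl}) to propagate the proportionality to both subintervals. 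You instead shoot from the left: boundedness at $a_2$ (equivalent to \eqref{bc}, and a one-dimensional constraint precisely because $k\neq 0$ in \eqref{ff2}) already fixes the solution on $(a_2,a_3)$ up to scale, and the transmission conditions \eqref{elltc1}--\eqref{elltc2} determine the continuation across $a_3$ uniquely, so the whole space of candidates satisfying only the conditions at $a_2$ and $a_3$ is one-dimensional; the boundary condition at $a_4$ is never used and is exposed as a pure quantization condition. This is more economical, avoiding the Wronskian computation and the cancellation check near $a_3$. Do note, however, that your step from $\ell_{12}=\ell_{22}=0$ to $\psi\equiv 0$ on $(a_3,a_3+\varepsilon)$ is exactly the assertion that a solution near $a_3^+$ is uniquely determined by its logarithmic coefficient and the constant term of its bounded part; that is the same singular-IVP uniqueness (equivalently, the Frobenius basis \eqref{dof1}--\eqref{dof2} together with $k\neq 0$) that the paper cites, and since it is the load-bearing fact in your propagation step it deserves an explicit reference rather than being absorbed into the notation of \eqref{arounda3}.
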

\\
\begin{proof}
From the compactness of $(L_S-i)^{-1}$, we know that each eigenvalue has finite multiplicity. Suppose $f_1$ and $f_2$ are linearly independent eigenfunctions of $L_S$ corresponding to the same eigenvalue $\lambda \in \mathbb{R}$. Then, on all of $(a_2,a_3) \cup (a_3,a_4)$ the following holds
\begin{equation}
f_1 L f_2 - f_2 L f_1 = 0.
\end{equation}
Consequently,
\begin{align*}
0 &= f_1 L f_2 - f_2 L f_1 = f_1 (P f_2')' - f_2 (P f_1')' \\
 &= [f_1,f_2]' .
\end{align*}
Thus, $[f_1,f_2]$ is constant on both $(a_2,a_3)$ and $(a_3,a_4)$. From the boundary conditions that $f_1$ and $f_2$ satisfy, we find that $[f_1,f_2](a_2^+) = 0 = [f_1,f_2](a_4^-)$, which implies $[f_1,f_2] = 0$ on $(a_2,a_3) \cup (a_3,a_4)$.
Since $[f_1,f_2] = P (f_1' f_2 - f_1 f_2')$, we get that
\begin{equation}\label{zerowronskian}
f_1' f_2 - f_1 f_2' = 0 \text{ on } (a_2,a_3) \cup (a_3,a_4).
\end{equation}
The functions $f_1$ and $f_2$ satisfy the transmission conditions at $a_3$. Consequently, they can be written as
\begin{align*}
f_1(x) &= f_{11}(x) + f_{12}(x) \ln|x-a_3|, \\
f_2(x) &= f_{21}(x) + f_{22}(x) \ln|x-a_3|,
\end{align*}
in a neighborhood of $a_3$, where $f_{ij}$ are continuous. Since the one-sided derivatives $f'_{ij}$ are bounded at $a_3$, equation \eqref{zerowronskian} implies
\begin{equation}\label{f1f2}
 \frac{\big( f_{12} f_{21} - f_{11} f_{22} \big) (x)}{x-a_3} + \mathcal{O}(\ln^2|x-a_3|)= 0.
\end{equation}
Note that the terms containing $\ln|x-a_3|/(x-a_3)$ cancel. Taking the limit $x \to a_3$ in \eqref{f1f2}, we obtain
\begin{equation*}
f_{12}(a_3) f_{21}(a_3) - f_{11}(a_3) f_{22}(a_3) = 0.
\end{equation*}
Thus, for some constant $C$:
\begin{equation*}
\left(\begin{array}{c}
f_{11}(a_3) \\
f_{12}(a_3)
\end{array} \right) = C
\left(\begin{array}{c}
f_{21}(a_3) \\
f_{22}(a_3)
\end{array} \right).
\end{equation*}
If we take $f_1$ on $(a_2,a_3)$, then $f_{11}(a_3)$ and $f_{12}(a_3)$ define a singular initial value problem on $(a_3,a_4)$ that is uniquely solvable (Theorem 8.4.1 in \cite{zettl}). Thus, $f_1 = C \cdot f_2$ on $(a_3,a_4)$. Now, on the other hand, by considering $f_1$ on $(a_3,a_4)$, the values $f_{11}(a_3)$ and $f_{12}(a_3)$ define a singular initial value problem on $(a_2,a_3)$ which has a unique solution. Hence, $f_1 = C \cdot f_2$ on $(a_2,a_3) \cup (a_3,a_4)$ in contradiction to our assumption. \qquad
\end{proof}

\section{Singular value decomposition of $H_T$}\label{section:svd}
Having introduced the differential operator $L_S$, we now want to relate it to the truncated Hilbert transform $H_T$. The main result of this section is that the eigenfunctions of $L_S$ fully determine the two families of singular functions of $H_T$. We start by stating the following \\
\begin{proposition}
On the set of eigenfunctions $\{ f_n \}_{n \in \mathbb{N}}$ of $L_S$, the following commutation relation holds:
\begin{equation}\label{comm}
	(H_T L(y,d_y) f_n) (x) = L(x,d_x) (H_T f_n)(x) \quad \text{ for } x \in (a_1,a_2) \cup (a_2,a_3).
\end{equation}
\end{proposition}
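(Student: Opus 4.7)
The plan is to follow the template of \cite{kat1, kat2}: establish a pointwise identity between the differential operator $L$ and the Cauchy kernel $K(x,y) = 1/(y-x)$, and then transfer it to $H_T$ via the Lagrange identity, showing that every resulting boundary contribution vanishes thanks to \eqref{bc}, \eqref{tc2}, and \eqref{tc1}. As a first step I would verify by direct computation that
\[
L(y,d_y) K(x,y) - L(x,d_x) K(x,y) \;=\; \partial_y S(x,y)
\]
for an explicit rational function $S(x,y)$ (of the form $-(P(y)-P(x))/(y-x)^2 + P'(x)/(y-x) + y^2 + 2xy - 4\sigma y$ up to functions of $x$ alone). By Taylor expansion of $P$ at $x$, the two singular pieces cancel and $S(x,y)$ is in fact regular at $y=x$; only the polynomial character of $P$ and the quadratic potential $2(x-\sigma)^2$ enter this pointwise identity.

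The second step is to apply the Lagrange identity $\int_\alpha^\beta [K(Lf_n) - (L_y K) f_n]\, dy = [K,f_n]_\alpha^\beta$ separately on $(a_2,a_3)$ and $(a_3,a_4)$, substitute $L_y K = L_x K + \partial_y S$, integrate the $\partial_y S$ term by parts, and reassemble. This expresses $\pi[(H_T L f_n)(x) - L(x,d_x)(H_T f_n)(x)]$ as a sum of boundary contributions at $a_2^+$, $a_3^\pm$, and $a_4^-$. The terms at $a_2^+$ and $a_4^-$ vanish because the Lagrange bracket $[K,f_n]$ contains the factor $P(y) f_n'(y)$, which tends to zero by \eqref{bc}, and because $f_n$ is continuous and $S(x,\cdot)$ is smooth there. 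The terms at $a_3^-$ and $a_3^+$ cancel pairwise: the expansion \eqref{arounda3} with $\ell_{11} = \ell_{12}$ and $\ell_{21} = \ell_{22}$, i.e.\ the transmission conditions \eqref{tc2} and \eqref{tc1}, forces both $[K(x,\cdot), f_n]$ and $f_n\, S$ to have matching one-sided limits at $a_3$, so the signed contributions from the two subintervals cancel in the sum.

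The main obstacle is the principal-value singularity at $y=x$ that appears when $x \in (a_2,a_3)$. Although $S$ itself is regular at $y=x$, moving $L(x,d_x)$ past the p.v.\ integral produces kernels like $P'(x)/(y-x)^2$ and $P(x)/(y-x)^3$ that must be understood as $x$-derivatives of principal-value integrals. I would handle this by excising a symmetric neighborhood $(x-\epsilon, x+\epsilon)$, running the Lagrange-identity argument on its complement, and taking $\epsilon \to 0^+$. The boundary contributions at $y = x \pm \epsilon$ coming from $[K,f_n]$, from $f_n\, S$, and from the distributional pieces generated by differentiating the p.v.\ in $x$ must be organized so that their odd parts in $\epsilon$ cancel between the two sides of $x$ while their even parts vanish in the limit. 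This cancellation is the delicate step, but it should follow by parity once $f_n$ is expanded to second order around $x$ and the explicit structure of $S$ (together with the multiplicative factors of $P(x)$ and $P'(x)$ in front of the singular kernels) is used. Once all boundary contributions at $a_2^+, a_3^\pm, a_4^-$ and at $y=x\pm\epsilon$ are shown to vanish, the identity \eqref{comm} follows.
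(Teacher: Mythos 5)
Your proposal follows essentially the same route as the paper's proof: a commutation identity for the kernel $1/(y-x)$, integration by parts (Lagrange identity) on the two subintervals, vanishing respectively pairwise cancellation of the boundary terms at $a_2,a_4$ and $a_3^{\pm}$ via the boundary and transmission conditions, and a symmetric excision $(x-\epsilon,x+\epsilon)$ with a parity/Taylor argument to control the principal value. One simplification you will find when you carry out your first computation: with this specific $P$ and $\sigma$ one has $L(y,d_y)\frac{1}{y-x}=L(x,d_x)\frac{1}{y-x}$ exactly for $y\neq x$ (your $S$ turns out to be independent of $y$), so the $\partial_y S$ term and its would-be boundary contributions at $a_3^{\pm}$ drop out entirely.
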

{\em Sketch of proof}.
This proof follows the same general idea as the proof of Proposition 2.1 in \cite{kat2}. We therefore provide full details only for those steps where additional care needs to be taken because of the singularity at $a_3$. The steps that are completely analogous to those in the proof of Proposition 2.1 in \cite{kat2} are only sketched here. 

Let $\psi \in \{f_n \}_{n \in \mathbb{N}}$. The boundedness of $\psi$ at $a_2^+$ and $a_4^-$ implies that $P \psi' \to 0$ and $P \psi \to 0$ there. Moreover, the transmission conditions at $a_3$ guarantee that $P \psi'$ is continuous at $a_3$. With these properties, the commutation relation for $x \in (a_1,a_2)$, i.e. where the Hilbert kernel is not singular, can be shown similarly to the proof of Proposition 2.1 in \cite{kat1}.

Next, let $x \in (a_2,a_3)$. The main difference from the proof of Proposition 2.1 in \cite{kat2} is that now the eigenfunctions are not in $C^{\infty}([a_2,a_4])$, but are singular at $a_3$.  However, the fact that we exclude the point $x=a_3$ allows us to always have a neighborhood of $x$ away from $a_3$ on which $\psi$ is bounded. We further note that  $\psi \in C^{\infty}([a_2,a_3) \cup (a_3,a_4])$. Since the Hilbert kernel is singular, we need to use  principal value integration and introduce the following notation: $\ie := [a_2,x-\epsilon] \cup [x + \epsilon,a_4]$. Here $\epsilon > 0$ is so small that $(x-\epsilon,x+2 \epsilon) \subset (a_2,a_3)$, i.e. the $\epsilon$-neighborhood of $x$ is well separated from $a_3$. Then,
\begin{equation*}
\pi (H_T L(y,d_y) \psi) (x) = \lim_{\epsilon \to 0^+} \int_{\ie} \Big[ \frac{(P(y) \psi'(y))'}{y-x} + \frac{2 (y-\sigma)^2 \psi(y)}{y-x} \Big] dy.
\end{equation*}
For the first term under the integral, we integrate by parts twice and plug in the boundary conditions. Again, we use that $P \psi' \to 0$ and $P\psi \to 0$ at $a_2^+$ and $a_4^-$:
\begin{align}
\int\limits_{\ie} \frac{(P(y) \psi'(y))'}{y-x} dy =& - \frac{(P \psi')(x- \epsilon) + (P \psi')(x + \epsilon)}{\epsilon} +  \frac{(P \psi)(x- \epsilon) - (P \psi)(x + \epsilon)}{\epsilon^2} \nonumber \\
& + \int\limits_{\ie} \psi(y) \frac{2 P(y)-P'(y)(y-x)}{(y-x)^3} dy. \label{psipint}
\end{align}
The integral on the right-hand side of \eqref{psipint} can be related to the derivatives of \\ $\int \psi(y)/(y-x) dy$.  
In \cite{kat2} similar relations (cf. eq. (2.7)) were obtained from the Leibniz integral rule, using explicitly that the integrand was continuous. In our case, the function $\psi$ is no longer continuous because of the singularity at $a_3$. We can generalize the argument of \cite{kat2} by invoking the dominated convergence theorem and rewrite the last term in \eqref{psipint} as follows: 
\begin{align*}
&\int\limits_{\ie} \psi(y) \frac{2 P(y) - P'(y)(y-x)}{(y-x)^3} dy = \\
&= P(x) \Big[ \frac{d^2}{dx^2} \int\limits_{\ie} \frac{\psi(y)}{y-x} dy + \frac{\psi'(x-\epsilon) + \psi'(x + \epsilon)}{\epsilon} - \frac{\psi(x-\epsilon) - \psi(x + \epsilon)}{\epsilon^2} \Big] \\
& \quad   + P'(x) \Big[ \frac{d}{dx} \int\limits_{\ie} \frac{\psi(y)}{y-x} dy +  \frac{\psi(x-\epsilon) + \psi(x + \epsilon)}{\epsilon} \Big] \\
& \quad - \int\limits_{\ie} 2 \psi(y) \frac{(y-\sigma)^2-(x-\sigma)^2}{y-x} dy
\end{align*}
Putting all pieces together, we obtain:
\begin{align*}
\pi (H_T L(y,d_y) \psi) &(x) = \\
= \lim_{\epsilon \to 0^+} &\Big\{- \frac{(P \psi')(x-\epsilon)+(P \psi')(x+\epsilon)}{\epsilon} + \frac{(P \psi)(x-\epsilon)-(P \psi)(x+\epsilon)}{\epsilon^2} \\
& \quad + P(x) \big[ \frac{\psi'(x-\epsilon) + \psi'(x + \epsilon)}{\epsilon} - \frac{\psi(x-\epsilon) - \psi(x + \epsilon)}{\epsilon^2} \big] \\
& \quad + P'(x) \frac{\psi(x-\epsilon) + \psi(x + \epsilon)}{\epsilon} + L(x,d_x) \int\limits_{\ie} \frac{\psi(y)}{y-x} dy \Big\}
\end{align*}
The eigenfunction $\psi$ is in $C^{\infty}[a_2,x+2\epsilon)$. 
Following \cite{kat2}, we can thus express the boundary terms in the above equation by Taylor expansions around $x$ and make use of the fact that the boundary terms consist only of odd functions in $\epsilon$.
 The boundary terms are then of the order $\mathcal{O}(\epsilon)$.
 We thus have
 \begin{equation}
 \pi (H_T L(y,d_y) \psi) (x) = \lim_{\epsilon \to 0^+ } L(x,d_x) \int_{\ie} \frac{\psi(y)}{y-x} dy.
 \end{equation}
 Since for $\epsilon > 0$ sufficiently small, $\psi \in C^{\infty}([x-\epsilon,x+\epsilon])$, one can interchange the limit with $L(x,d_x)$ as in \cite{kat2}. \qquad
\endproof
\\

Because the spectrum of $L_S$ is purely discrete, we have thus 
found an orthonormal basis (the eigenfunctions of $L_S$) $\{ f_n \}_{n \in \mathbb{N}}$ of $\mathcal{L}^2(a_2,a_4)$ for which \eqref{comm} holds.
Let us define $g_n := H_T f_n/\| H_T f_n\|_{\mathcal{L}^2(a_1,a_3)}$. Then, in order to obtain the SVD for $H_T$ (with singular functions $f_n$ and $g_n$), it is sufficient to prove that the $g_n$'s form an orthonormal system of $\mathcal{L}^2(a_1,a_3)$ (they will then consequently form an orthonormal basis of $\mathcal{L}^2(a_1,a_3)$, see Proposition \ref{NR}). 

The orthogonality of the $g_n$'s will follow from the commutation relation. Since $f_n$ is an eigenfunction of  $L_S$ for some eigenvalue $\lambda_n$, we obtain
\begin{equation*}
L(x,d_x) g_n(x) = \lambda_n g_n(x), \quad x \in (a_1,a_2) \cup (a_2,a_3).
\end{equation*}
Similarly to $L_S$, we define a new self-adjoint operator that acts on functions supported on $[a_1,a_3]$:
\begin{definition}
Let $\tilde{D}_{\max} := D_{1,\max} + D_{2,\max}$ and $\tilde{L}_{\min} := L_{1,\min} + L_{2,\min}$. The operator $\tilde{L}_S: D(\tilde{L}_S) \to \mathcal{L}^2(a_1,a_3)$ is defined as the self-adjoint extension of $\tilde{L}_{\min}$, where 
\begin{align}
D(\tilde{L}_S) := \{ \psi \in \tilde{D}_{\max}: [\psi,u](a_1^+) &= [\psi,u](a_3^-) = 0, \nonumber \\
[\psi,u](a_2^-) &= [\psi,u](a_2^+), [\psi,v](a_2^-) = [\psi,v](a_2^+) \}
\end{align}
with the maximal domain functions $u, v \in \tilde{D}_{\max}$ as in \eqref{u}, \eqref{v}.
\end{definition}
\\

The intuition then is the following. The function $f_n$ is bounded at $a_2^+$ and logarithmic at $a_3$, where it satisfies the transmission conditions. Consequently, as will be shown below, $g_n$ is bounded at $a_3^-$, logarithmic at $a_2$ and satisfies the corresponding transmission conditions at $a_2$. Clearly, it is also bounded at $a_1^+$. Thus, $g_n$ is an eigenfunction of the self-adjoint operator $\tilde{L}_S$. As a consequence, the $g_n$'s form an orthonormal system. \\
\begin{proposition}
If $L_S f_n = \lambda_n f_n$, then $g_n := H_T f_n/\|H_T f_n \|_{\mathcal{L}^2(a_1,a_3)}$ is an eigenfunction of $\tilde{L}_S$ corresponding to the same eigenvalue
\begin{equation}
\tilde{L}_S g_n = \lambda_n g_n.
\end{equation} 
\end{proposition}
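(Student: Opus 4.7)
The plan is to apply the commutation relation \eqref{comm} to $f_n$, which gives $L(x,d_x) g_n(x) = \lambda_n g_n(x)$ on $(a_1,a_2) \cup (a_2,a_3)$, and then verify that $g_n$ lies in $D(\tilde L_S)$; combined, these imply $\tilde L_S g_n = \lambda_n g_n$. By Fuchs-Frobenius at the regular singular points $a_1, a_2, a_3$, any solution of $(L - \lambda_n)g = 0$ admits local expansions of the form \eqref{dof1}--\eqref{dof2}, so it is enough to determine which branch $g_n$ selects at each $a_i$ and to check the transmission conditions at $a_2$; square integrability of $g_n$ and $\tilde L g_n$ then follows from the resulting singular structure. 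The boundary condition at $a_1$ is immediate: for $x < a_2$ the kernel $1/(y-x)$ is smooth in $y \in [a_2,a_4]$, so $H_T f_n$ is real-analytic on $(-\infty, a_2)$ and in particular bounded at $a_1$, giving $[g_n,u](a_1^+) = 0$.

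The boundedness at $a_3$ is the crux. Using the local expansion \eqref{arounda3} for $f_n$ together with the transmission identities \eqref{elltc1}--\eqref{elltc2}---which force both the bounded part and the coefficient of $\log|y - a_3|$ in $f_n$ to agree from either side of $a_3$---I would split the principal value integral into the contribution from a small neighbourhood $(a_3 - \eta, a_3 + \eta)$ and its complement. The complement is smooth in $x$ near $a_3$. For the local piece, the bounded part of $f_n$ yields a standard Cauchy integral over an interior point (continuous in $x$), while the log part reduces after translation to $\mathrm{p.v.} \int_{-\eta}^{\eta} \log|z|/(z-t)\,dz$ with $t = x - a_3$; the symmetry $\log|z| = \log|-z|$ lets one rewrite this as $\int_0^{\eta} \log|(z-t)/(z+t)|/z\,dz$, and rescaling $u = z/t$ shows that it remains bounded as $t \to 0$. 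This rules out the logarithmic branch of $g_n$ at $a_3$ and gives $[g_n,u](a_3^-) = 0$.

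For the transmission conditions at $a_2$, I would peel off the endpoint contribution of the Hilbert transform: writing $f_n(y) = f_n(a_2) + (f_n(y) - f_n(a_2))$ on $(a_2, a_2 + \delta)$ yields the standard identity
\begin{equation*}
\pi (H_T f_n)(x) = -f_n(a_2) \log|x - a_2| + \rho(x),
\end{equation*}
where $\rho$ extends continuously across $a_2$ and the coefficient $-f_n(a_2)$ is the same whether $x$ approaches $a_2$ from the left or from the right. Hence both the bounded part and the logarithmic coefficient of $g_n$ match across $a_2$, and the computations that reduced \eqref{sa2}--\eqref{sa3} to \eqref{tc2} and \eqref{tc1} now yield $[g_n,u](a_2^-) = [g_n,u](a_2^+)$ and $[g_n,v](a_2^-) = [g_n,v](a_2^+)$. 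Putting the pieces together, $g_n \in D(\tilde L_S)$ and $\tilde L_S g_n = \lambda_n g_n$.

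The main obstacle is the boundedness of $g_n$ at $a_3$: since $a_3$ is interior to the integration interval $[a_2,a_4]$ and $f_n$ itself has a genuine logarithmic singularity there, one cannot rely on the standard endpoint asymptotics that handle $a_2$, and a careful cancellation estimate in the principal value integral around $a_3$---in which the transmission conditions on $f_n$ play the decisive role---is what must be established.
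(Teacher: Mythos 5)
Your proposal follows essentially the same route as the paper: apply the commutation relation, then verify membership in $D(\tilde L_S)$ by splitting the principal-value integral into near and far parts at each of $a_1$, $a_2$, $a_3$, with the matching of the bounded and logarithmic coefficients of $f_n$ across $a_3$ (the transmission conditions) producing the cancellation that keeps $g_n$ bounded there. The only real difference is that where the paper cites Gakhov (Sections 8.1 and 8.5) for the boundedness of the model integrals at $a_3$, you carry out the symmetrization and rescaling explicitly, which is a correct, self-contained substitute for that citation.
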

\begin{proof}
First of all, the commutation relation for $f_n$ yields
\begin{align*}
L(x,d_x) (H_T f_n)(x) &= (H_T L(y,d_y) f_n)(x), \\
L(x,d_x) g_n(x) &= \lambda_n g_n(x), \quad x \in (a_1,a_2) \cup (a_2,a_3).
\end{align*}
What remains to be shown is that $g_n$ satisfies the boundary and transmission conditions. Therefore, we consider
$p.v. \int_{a_2}^{a_4} f_n(y)/(y-x) dy$
for $x$ close to $a_1$, $a_2$ and $a_3$. In a neighborhood of $a_1$ away from $[a_2,a_4]$ this function is clearly analytic.
Next, let $x$ be confined to a small neighborhood of $a_2$. Since the discontinuity of $f_n$ is away from $a_2$, we can split the above integral into two -- one that integrates over a right neighborhood of $a_2$ and another one that is an analytic function. The first item in Lemma \ref{propH} then implies that 
\begin{equation}
p.v. \int\limits_{a_2}^{a_4} \frac{f_n(y)}{y-x} dy = \tilde{g}_{n,1}(x) - f_n(a_2^+) \ln|x-a_2|
\end{equation}
where $ \tilde{g}_{n,1}(x)$ is continuous in a neighborhood of $x = a_2$.
Thus, $g_n$ satisfies the transmission conditions \eqref{tc2}, \eqref{tc1}.

It remains to check the behavior of $g_n$ close to $a_3^-$. We first express $f_n$ as 
\begin{equation*}
f_n(y) = f_{n,1}(y) + f_{n,2}(y) \ln|y-a_3|,
\end{equation*}
where both $f_{n,1}$ and $f_{n,2}$ are Lipschitz continuous. Then, in view of Lemma \ref{propH}, both summands on the right-hand side of the equation
\begin{align*}
p.v. \int\limits_{a_2}^{a_4} \frac{f_n(y)}{y-x} dy =& p.v. \int\limits_{a_2}^{a_4} \frac{f_{n,1}(y)}{y-x} dy + p.v. \int\limits_{a_2}^{a_4} \frac{f_{n,2}(y)\ln|y-a_3|}{y-x} dy
\end{align*}
remain bounded as $x$ tends to $a_3$. \qquad
\end{proof}

Since the spectrum of $L_S$ is simple, we can conclude that the $g_n$'s form an orthonormal system and thus the following holds: \\
\begin{theorem}
The eigenfunctions $f_n$ of $L_S$, together with \\ $g_n:=H_T f_n/ \| H_T f_n\|_{\mathcal{L}^2(a_1,a_3)}$ and $\sigma_n := \| H_T f_n \|_{\mathcal{L}^2(a_1,a_3)}$ form the singular value decomposition for $H_T$:
\begin{align}
H_T f_n &= \sigma_n g_n, \\
H_T^* g_n &= \sigma_n f_n.
\end{align}
\end{theorem}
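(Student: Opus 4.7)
The plan is to deduce the two SVD identities from the eigenfunction relation $\tilde{L}_S g_n = \lambda_n g_n$ (established in the preceding proposition), the simplicity of the spectrum of $L_S$, and the self-adjointness of $\tilde{L}_S$. Orthogonality of $\{g_n\}$ will be a direct consequence of the eigenvector orthogonality for a self-adjoint operator, while the adjoint identity $H_T^{*} g_n = \sigma_n f_n$ will be extracted by pairing $H_T^{*} g_n$ against the complete orthonormal basis $\{f_m\}$ already furnished by $L_S$.

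First, I would check that $\sigma_n = \|H_T f_n\|_{\mathcal{L}^2(a_1,a_3)} > 0$ for every $n$, so that $g_n$ is well defined; this is the injectivity of $H_T$ on $\mathcal{L}^2(a_2,a_4)$. It can be invoked from the uniqueness portion of the result of Defrise et al.\ \cite{defrise}, or established directly: if $H_T f \equiv 0$, then $Hf$ vanishes on $(a_1,a_2)$ where it is real-analytic, hence on all of $(-\infty,a_2)$ by analytic continuation; expanding $1/(y-x)$ in powers of $1/x$ as $x \to -\infty$ in the resulting identity $\int_{a_2}^{a_4} f(y)/(y-x)\,dy \equiv 0$ yields $\int y^k f(y)\,dy = 0$ for every $k \ge 0$, whence $f \equiv 0$. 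With $\sigma_n > 0$, the $g_n$ are unit vectors and, by the preceding proposition, eigenfunctions of $\tilde{L}_S$ with eigenvalues $\lambda_n$.

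Next, because $L_S$ has simple spectrum, the enumerated eigenvalues $\{\lambda_n\}$ are pairwise distinct. Since $\tilde{L}_S$ is self-adjoint, its eigenfunctions for distinct eigenvalues are orthogonal in $\mathcal{L}^2(a_1,a_3)$, so $\{g_n\}$ is orthonormal. The identity $H_T f_n = \sigma_n g_n$ is exactly the definition of $g_n$. For the companion identity, I expand $H_T^{*} g_n \in \mathcal{L}^2(a_2,a_4)$ in the complete orthonormal basis $\{f_m\}$ and compute the Fourier coefficients by adjunction and the first identity,
\[
\langle H_T^{*} g_n, f_m\rangle = \langle g_n, H_T f_m\rangle = \sigma_m \langle g_n, g_m\rangle = \sigma_n \delta_{nm},
\]
which gives $H_T^{*} g_n = \sigma_n f_n$.

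The only genuinely new ingredient I anticipate as an obstacle is the injectivity step $\sigma_n > 0$, since it is not a formal consequence of the spectral theory already developed for $L_S$ and $\tilde{L}_S$ and requires either citing the uniqueness result of \cite{defrise} or running the short analytic-continuation/moment argument sketched above. Once this positivity is in hand, the orthonormality of $\{g_n\}$ and the adjoint identity follow by a clean bookkeeping assembly built on the self-adjointness of $\tilde{L}_S$ and the simplicity of the spectrum of $L_S$.
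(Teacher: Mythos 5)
Your proposal is correct and follows essentially the same route as the paper: orthonormality of the $g_n$ from the simplicity of the spectrum of $L_S$ combined with the self-adjointness of $\tilde{L}_S$, well-definedness of $g_n$ from the injectivity of $H_T$ (the paper's Lemma \ref{analytic} and Proposition \ref{NR} give exactly your analytic-continuation argument), and the adjoint identity by expanding against the complete basis $\{f_m\}$. You merely spell out the final bookkeeping that the paper leaves implicit.
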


\section{Accumulation points of the singular values of $H_T$}\label{section:acc}
The main result of this section is that $0$ and $1$ are accumulation points of the singular values of  $H_T$. To find this, we first analyze the nullspace and range of $H_T$, which will also prove the ill-posedness of the inversion of $H_T$. First, we need to state the following \\
\begin{lemma}\label{analytic}
If the Hilbert transform of a compactly supported $f \in \mathcal{L}^2(a,b)$ vanishes on an open interval $(c,d)$ disjoint from the object support, then $f=0$ on all of $\mathbb{R}$.
\end{lemma}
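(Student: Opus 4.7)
The plan is to extend $Hf$ to an analytic function off the real axis via a Cauchy-type integral, use analyticity to propagate the vanishing on $(c,d)$ to the whole complement of the support, and then recover $f$ from the boundary values via the Plemelj--Sokhotski jump relations.

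First I would define, for complex $z \notin [a,b]$,
\begin{equation*}
F(z) := \frac{1}{\pi} \int_a^b \frac{f(y)}{y-z}\, dy,
\end{equation*}
which is holomorphic on the connected open set $\mathbb{C} \setminus [a,b]$ because the kernel $1/(y-z)$ is holomorphic in $z$ there and $f \in \mathcal{L}^2(a,b)$ ensures integrability. For real $x \in \mathbb{R} \setminus [a,b]$ the principal value is unnecessary, so $F(x) = (Hf)(x)$. By hypothesis $(c,d)$ is disjoint from $[a,b]$, so $F \equiv 0$ on $(c,d)$. Since $\mathbb{C} \setminus [a,b]$ is connected, the identity theorem forces $F \equiv 0$ on all of $\mathbb{C} \setminus [a,b]$.

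Next I would appeal to the Plemelj--Sokhotski formulas to recover $f$. Using the distributional limit $(y-x_0 \mp i0)^{-1} = \mathrm{p.v.}\, (y-x_0)^{-1} \pm i\pi \delta(y-x_0)$, one gets the boundary values
\begin{equation*}
F(x_0 \pm i0) = (Hf)(x_0) \pm i f(x_0), \qquad x_0 \in (a,b),
\end{equation*}
so that $F(x_0+i0) - F(x_0-i0) = 2i\, f(x_0)$. Since $F$ vanishes identically on both half-planes, both boundary values are $0$, hence $f(x_0)=0$ a.e.\ on $(a,b)$, and $f$ vanishes outside $[a,b]$ by assumption. This proves $f = 0$ on all of $\mathbb{R}$.

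There is no real obstacle here: the argument is a standard Cauchy-integral/analytic-continuation package, and the only items requiring mild care are (i) the fact that $\mathbb{C}\setminus[a,b]$ is connected (so a single interval of vanishing suffices to kill $F$ globally), and (ii) justifying the Plemelj boundary limits for $f \in \mathcal{L}^2$, which holds in the $\mathcal{L}^2$ sense (or pointwise a.e.) and is sufficient for the conclusion $f = 0$ in $\mathcal{L}^2(a,b)$. Should one prefer a more direct route, the same conclusion can be obtained via the Fourier transform: compact support makes $\hat f$ entire, $\widehat{Hf}(\xi) = -i\,\mathrm{sgn}(\xi)\hat f(\xi)$, and the vanishing of $Hf$ on an open set combined with analyticity forces $\hat f \equiv 0$; however, the complex-analytic route above is the most economical.
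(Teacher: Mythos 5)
Your proposal is correct and follows essentially the same route as the paper: the paper's (sketched) proof also forms the Cauchy integral $\int_a^b f(y)/(y-z)\,dy$, notes its analyticity off the support, and then invokes the analytic continuation plus boundary-value (Plemelj jump) argument of Lemma 2.1 in the cited reference to conclude $f=0$. You have simply written out in full the final step that the paper delegates to that reference.
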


{\em Sketch of proof}.
A similar statement (and proof) can be found in \cite{cour}. The main difference is that here we consider a more general class of functions $f$. By dominated convergence, $f \in L^1(a,b)$ implies that for any $z \in \Omega =  \mathbb{C} \backslash ((-\infty,a) \cup (b, \infty))$, the function $g(z) = \int_{a}^b f(x)/(x-z) dx$ is differentiable in a neighborhood of $z$. Thus, $g$ is analytic on $\Omega$. The statement then follows in the same way as Lemma 2.1 in \cite{cour}. \qquad \endproof

With this property of the Hilbert transform, we can obtain results on the nullspace and the range of $H_T$: \\
\begin{proposition}\label{NR}
The operator $H_T: \mathcal{L}^2(a_2,a_4) \rightarrow \mathcal{L}^2(a_1,a_3)$ has a trivial nullspace and dense range that is not all of $\mathcal{L}^2(a_1,a_3)$, i.e.
\begin{align}
\text{Ker}(H_T) &= \{ 0 \}, \label{nht}\\
\text{Ran}(H_T) & \neq \mathcal{L}^2(a_1,a_3),\label{rht1} \\
\overline{\text{Ran}(H_T)} & = \mathcal{L}^2(a_1,a_3). \label{rht2}
\end{align}
\end{proposition}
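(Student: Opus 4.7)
The plan is to dispatch each of the three statements separately, using Lemma \ref{analytic} for the first two and an analyticity-of-the-kernel argument for the third.

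For \eqref{nht}, I would suppose $H_T f = 0$ for some $f \in \mathcal{L}^2(a_2,a_4)$. Since $a_1 < a_2$, the interval $(a_1,a_2) \subset (a_1,a_3)$ is disjoint from the support of $f$, and on it $H_T f = (Hf)|_{(a_1,a_2)}$, so the full Hilbert transform $Hf$ vanishes on a nonempty open interval disjoint from the object support. Lemma \ref{analytic} then yields $f \equiv 0$.

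For \eqref{rht2}, I would use that $\overline{\mathrm{Ran}(H_T)} = \mathrm{Ker}(H_T^*)^\perp$, so it suffices to show $\mathrm{Ker}(H_T^*) = \{0\}$. Since $H^* = -H$ on $\mathcal{L}^2(\mathbb{R})$, the adjoint of $H_T = \mathcal{P}_{[a_1,a_3]} H \mathcal{P}_{[a_2,a_4]}$ is $H_T^* = -\mathcal{P}_{[a_2,a_4]} H \mathcal{P}_{[a_1,a_3]}$. If $H_T^* g = 0$ for $g \in \mathcal{L}^2(a_1,a_3)$, then $Hg$ vanishes on $(a_2,a_4)$, and in particular on $(a_3,a_4)$, which is disjoint from the support of $g$. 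Lemma \ref{analytic} again forces $g \equiv 0$.

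For \eqref{rht1}, the key observation is that for any $f \in \mathcal{L}^2(a_2,a_4)$ and any $x \in (a_1,a_2)$, the kernel $1/(y-x)$ is a real-analytic function of $x$ on $(a_1,a_2)$ uniformly for $y \in [a_2,a_4]$, so by dominated convergence (applied to difference quotients) the function $x \mapsto (H_T f)(x)$ extends to a real-analytic function on $(a_1,a_2)$. Hence every element of $\mathrm{Ran}(H_T)$, viewed a.e. as a representative, agrees on $(a_1,a_2)$ with a real-analytic function. I would then exhibit an explicit $g \in \mathcal{L}^2(a_1,a_3)$ that does not admit such a representative, for example $g = \chi_{[a_1,c]}$ with $a_1 < c < a_2$: no modification on a null set makes $g$ real-analytic on $(a_1,a_2)$, since $g$ is locally constant with two distinct values in every neighborhood of $c$, contradicting the identity theorem. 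Therefore $g \notin \mathrm{Ran}(H_T)$.

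No step looks genuinely difficult; the only care needed is in spelling out the adjoint formula for \eqref{rht2} and in making the analyticity argument in \eqref{rht1} rigorous at the level of $L^2$-equivalence classes (i.e. choosing a representative and pointing out that real-analytic representatives, if they exist, are unique on connected open sets). That second point is the only place where I would slow down and write carefully.
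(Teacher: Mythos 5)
Your arguments for \eqref{nht} and \eqref{rht2} are exactly the paper's: injectivity of $H_T$ via Lemma \ref{analytic} applied on $(a_1,a_2)$, and density of the range via $\mathrm{Ker}(H_T^*)=\{0\}$, with $H_T^*$ being another truncated Hilbert transform of the same type. For \eqref{rht1} you take a genuinely different route. The paper picks a nonzero $g$ supported in $[a_2,a_3]$ (so vanishing on $(a_1,a_2)$) and argues: if $H_T f = g$ then $Hf$ vanishes on $(a_1,a_2)$, hence $f=0$ by Lemma \ref{analytic}, hence $g=0$, a contradiction --- i.e.\ it reuses the injectivity lemma a second time. You instead observe that every element of $\mathrm{Ran}(H_T)$ is real-analytic on $(a_1,a_2)$ (the kernel is nonsingular there) and exhibit a $g=\chi_{[a_1,c]}$, $a_1<c<a_2$, that cannot agree a.e.\ with any real-analytic function on that interval. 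Both proofs are correct and both ultimately rest on the same fact, namely that the Cauchy integral of an $L^2$ function is analytic off its support; the paper's version packages this once in Lemma \ref{analytic} and applies it uniformly to all three claims, which is more economical, while yours isolates for \eqref{rht1} a weaker and more elementary ingredient (local analyticity away from the support, no identity-theorem continuation needed for that part) and produces a completely explicit function outside the range, which is arguably more informative. Your caution about working with a.e.\ representatives is well placed but routine: a real-analytic representative on a connected open set, if it exists, is unique, and $\chi_{[a_1,c]}$ admits none since it takes each of two values on a set of positive measure in $(a_1,a_2)$.
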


{\em Proof of \eqref{nht}}.
Suppose $H_T f = 0$. Then 
\begin{align*}
	H \chi_{[a_2,a_4]} f &= 0 \text{ on } (a_1,a_2),
\end{align*}
and by Lemma \ref{analytic}, $f=0$ on all of $[a_2,a_4]$. 
Thus, $f \in \mathcal{L}^2(a_2,a_4)$ can always be uniquely determined from $H_T f$. 

{\em Proof of \eqref{rht1}}.
Take any $g \in \mathcal{L}^2(a_1,a_3)$ that vanishes on $(a_1,a_2)$ and such that $\| g\|_{\mathcal{L}^2(a_1,a_3)} \neq 0$. Suppose $g \in \text{Ran}(H_T)$. By Lemma \ref{analytic}, if $f \in \mathcal{L}^2(a_2,a_4)$ and $H_T f = g$, then $f$ is zero on $[a_2,a_4]$. This implies that $g = 0$ on $(a_1,a_3)$, which contradicts the assumption $\| g \| \neq 0$. 

{\em Proof of \eqref{rht2}}.
The operator $H_T^*$ is also a truncated Hilbert transform with the same general properties. By the above argument, $\text{Ker} (H_T^*) = \{ 0 \}$. \\ Thus, $\text{Ran}(H_T)^{\perp} = \{ 0 \}$. \qquad \endproof

Equation \eqref{rht1} shows the ill-posedness of the problem. It is not true that for every $g \in \mathcal{L}^2(a_1,a_3)$ there is a solution $f$ to the equation $H_T f = g$. Since $\text{Ran}(H_T)$ is dense, the solution need not depend continuously on the data. Thus, our problem violates two properties of Hadamard's well-posedness criteria \cite{engl}. These are the existence of solutions for all data and the continuous dependence of the solution on the data. 
We now turn to the spectrum of $H_T^* H_T$. In what follows, $\| \cdot \|$ denotes the norm associated with $\mathcal{L}^2(\mathbb{R})$, and $\langle \cdot, \cdot \rangle$ denotes the $\mathcal{L}^2(\mathbb{R})$ inner product.
We begin with proving the following \\
\begin{lemma}\label{norm1}
The operator $H_T^* H_T$ has norm equal to $1$.
\end{lemma}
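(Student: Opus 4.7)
The plan is to establish $\|H_T\| = 1$ directly, from which $\|H_T^* H_T\| = \|H_T\|^2 = 1$ is immediate. I establish matching upper and lower bounds.

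The upper bound $\|H_T\| \le 1$ is essentially free. For $f \in \mathcal{L}^2(a_2, a_4)$, extend by zero to $\mathcal{L}^2(\mathbb{R})$; since the ordinary Hilbert transform on $\mathcal{L}^2(\mathbb{R})$ is an isometry (Fourier multiplier of unit modulus) and $\mathcal{P}_{[a_1,a_3]}$ is an orthogonal projection,
\[
\|H_T f\|_{\mathcal{L}^2(a_1,a_3)} = \|\mathcal{P}_{[a_1,a_3]} H f\|_{\mathcal{L}^2(\mathbb{R})} \le \|H f\|_{\mathcal{L}^2(\mathbb{R})} = \|f\|_{\mathcal{L}^2(a_2,a_4)}.
\]

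For the lower bound I would exhibit a sequence of unit-norm test functions whose Hilbert transforms become almost entirely concentrated inside the measurement window. Fix any point $c$ in the open overlap $(a_2,a_3)$ and any $\phi \in \mathcal{L}^2(\mathbb{R})$ supported in $[-1,1]$ with $\|\phi\|_{\mathcal{L}^2(\mathbb{R})}=1$, and set
\[
f_\delta(x) := \delta^{-1/2}\phi\bigl((x-c)/\delta\bigr), \qquad 0<\delta<\min(c-a_2,\,a_3-c).
\]
Then $\mathrm{supp}\,f_\delta \subset (a_2,a_3) \subset (a_2,a_4)$ and $\|f_\delta\|_{\mathcal{L}^2(a_2,a_4)}=1$. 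The substitution $y = c+\delta t$ in the principal-value integral defining $H$ shows that the Hilbert transform is covariant under this rescaling, namely
\[
(H f_\delta)(x) = \delta^{-1/2}(H\phi)\bigl((x-c)/\delta\bigr).
\]
Substituting $u=(x-c)/\delta$ in the $\mathcal{L}^2$-norm then gives
\[
\|H_T f_\delta\|_{\mathcal{L}^2(a_1,a_3)}^2 = \int_{(a_1-c)/\delta}^{(a_3-c)/\delta} |H\phi(u)|^2\, du \;\longrightarrow\; \|H\phi\|_{\mathcal{L}^2(\mathbb{R})}^2 = 1
\]
as $\delta \to 0^+$, because the two endpoints tend to $-\infty$ and $+\infty$ (this is precisely where I need $c$ strictly inside $(a_1,a_3)$) and because $H$ is an isometry on $\mathcal{L}^2(\mathbb{R})$.

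Combining the two bounds yields $\|H_T\|=1$, hence $\|H_T^* H_T\|=1$. I do not anticipate any serious obstacle: the only two points that require care are the dilation identity $(Hf_\delta)(x)=\delta^{-1/2}(H\phi)((x-c)/\delta)$, which follows from a single change of variables inside the principal value, and the geometric observation that $c \in (a_2,a_3)\subset(a_1,a_3)$ forces the rescaled truncation window to exhaust all of $\mathbb{R}$ in the limit $\delta\to 0^+$.
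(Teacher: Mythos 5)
Your proof is correct. Both you and the paper prove the lower bound by concentrating unit-norm bumps at a point of the overlap $(a_2,a_3)$ and exploiting that $H$ is unitary on $\mathcal{L}^2(\mathbb{R})$, but the execution differs in two ways that are worth noting. First, you compute $\|H_T f_\delta\|$ directly via the exact dilation covariance $(Hf_\delta)(x)=\delta^{-1/2}(H\phi)((x-c)/\delta)$ and then invoke only the qualitative fact that the tails of the $\mathcal{L}^2$ function $|H\phi|^2$ vanish; this makes the two vanishing moments imposed in the paper's proof unnecessary (they serve there only to force $H\psi=\mathcal{O}(|y|^{-3})$ and hence the quantitative rate $\mathcal{O}(a^{-5})$ for the discarded tail — a rate your soft argument does not provide but the lemma does not require). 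Second, you pass through $\|H_T\|=1$ and the identity $\|H_T^*H_T\|=\|H_T\|^2$, whereas the paper estimates $\|(I-H_T^*H_T)\psi_a\|$ directly by writing $I-\chi_{[a_1,a_3]}$ inside the composition; these are equivalent, and your reduction is the cleaner one. Your version is more elementary and self-contained; the paper's version has the side benefit of exhibiting an explicit convergence rate for the approximating sequence.
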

\\
\begin{proof}
From $\| H \| = 1$, we know that $\| H_T^* H_T \| \leq 1$. Since 
\begin{equation*}
\| H_T^* H_T \| = \sup_{\| \psi \| = 1} \| H_T^* H_T \psi \|,
\end{equation*}
finding a sequence $\psi_n$ with $\| \psi_n\| = 1$ and $\| H_T^* H_T \psi_n \| \to 1$ would prove the assertion. 

Take a compactly supported function $\psi \in \mathcal{L}^2([-1,1])$ with $\| \psi \| = 1$ and two vanishing moments, $\int_{-1}^1 \psi(x) dx = 0 = \int_{-1}^1 x \cdot \psi(x) dx$. From this, we define a family of functions, such that the norm is preserved but the supports decrease. More precisely, for $a>2/(a_3-a_2)$, we set
\begin{equation}
\psi_a(x) = \sqrt{a} \psi (a (x-\frac{a_2+a_3}{2})).
\end{equation}
These functions satisfy $\| \psi_a \| = 1$ and supp $\psi_a = [\frac{a_2+a_3}{2}- \frac{1}{a} ,\frac{a_2+a_3}{2} + \frac{1}{a}] \subset [a_2,a_3 ]$. For their Hilbert transforms we obtain
\begin{equation}
(H \psi_a)(x) = \sqrt{a} (H \psi) (a (x-\frac{a_2+a_3}{2})).
\end{equation}
We can write
\begin{align}
H_T^* H_T \psi_a &= - \chi_{[a_2,a_4]} H \chi_{[a_1,a_3]} H \chi_{[a_2,a_4]} \psi_a \nonumber \\
&= - \chi_{[a_2,a_4]} H (I - (I- \chi_{[a_1,a_3]})) H \chi_{[a_2,a_4]} \psi_a \nonumber \\
&= \psi_a + \chi_{[a_2,a_4]} H (I - \chi_{[a_1,a_3]}) H \chi_{[a_2,a_4]} \psi_a; \nonumber \\
(I - H_T^* H_T) \psi_a &=  - \chi_{[a_2,a_4]} H (I - \chi_{[a_1,a_3]}) H \chi_{[a_2,a_4]} \psi_a.
\end{align}
Consider the $\mathcal{L}^2$-norm of the last expression
\begin{align}
\| (I - H_T^* H_T) \psi_a \|^2 &=  \| \chi_{[a_2,a_4]} H (I - \chi_{[a_1,a_3]}) H \chi_{[a_2,a_4]} \psi_a \|^2 \nonumber \\
& \leq  \| (I - \chi_{[a_1,a_3]}) H \chi_{[a_2,a_4]} \psi_a \|^2 \nonumber \\
& = \int_{(-\infty, a_1) \cup (a_3,\infty)} |(H \psi_a)(x)|^2 dx \nonumber \\
&= a \int_{(-\infty, a_1) \cup (a_3,\infty)} |(H \psi)(a (x-\frac{a_2+a_3}{2})|^2 dx \nonumber \\
&= \int_{-\infty}^{a \cdot (a_1-\frac{a_2+a_3}{2})} |H \psi|^2 dy + \int_{a \cdot (a_3-\frac{a_2+a_3}{2})}^{\infty} |H \psi|^2 dy. \label{ma}
\end{align}
Because of the ordering of the $a_i$'s, we have that $a_1-\frac{a_2+a_3}{2} < 0$ and $a_3-\frac{a_2+a_3}{2} > 0$. Since $\psi$ has two vanishing moments, $H \psi$ asymptotically behaves like $1/|y|^3$ and hence, both integrals in \eqref{ma} are of the order $\mathcal{O}(a^{-5})$. Thus, given any $\epsilon > 0$, one can find $a > 2/(a_3-a_2)$ such that
\begin{equation*}
\| (I - H_T^* H_T) \psi_a \| < \epsilon.
\end{equation*}
Consequently,
\begin{align*}
\|  H_T^* H_T \psi_a \| \geq \| \psi_a \| - \| (I - H_T^* H_T) \psi_a \| > 1- \epsilon.
\end{align*}
Therefore
\begin{equation*}
\| H_T^* H_T \psi_a \| \to 1 \text{ as } a \to \infty,
\end{equation*}
which implies that $\| H_T^* H_T \| = 1$. \qquad
\end{proof} 
\\

We are now in a position to prove  \\
\begin{theorem}
The values $0$ and $1$ are accumulation points of the singular values of $H_T$.
\end{theorem}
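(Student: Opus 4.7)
The plan is to exploit the SVD established in the previous section, writing $\sigma(H_T^* H_T) = \{\sigma_n^2\}_{n\in\mathbb{N}}$ together with its accumulation points, and then squeeze the extremes $0$ and $1$ in using (a) the norm computation of Lemma~\ref{norm1}, (b) the statements \eqref{nht}, \eqref{rht1}, \eqref{rht2} about kernel and range, and (c) Lemma~\ref{analytic}. Since $H$ is an $L^2(\mathbb{R})$-isometry, $\|H_T\|\le 1$, so every $\sigma_n\in(0,1]$; trivial-kernel \eqref{nht} rules out $\sigma_n=0$.

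First I would handle the accumulation at $1$. Lemma~\ref{norm1} gives $\sup_n \sigma_n^2 = \|H_T^* H_T\|=1$, so it is enough to show that $1$ is never actually attained: if it is not attained, the supremum must be approached by a subsequence $\sigma_{n_k}^2\to 1$. Suppose for contradiction that $\sigma_{n_0} = 1$ for some $n_0$, with corresponding $f_{n_0}$ of unit norm. Plancherel for the full Hilbert transform yields
\begin{equation*}
\|f_{n_0}\|^2 = \|H f_{n_0}\|^2 = \|\mathcal{P}_{[a_1,a_3]} H f_{n_0}\|^2 + \|(I-\mathcal{P}_{[a_1,a_3]}) H f_{n_0}\|^2 = \sigma_{n_0}^2 \|f_{n_0}\|^2 + \|(I-\mathcal{P}_{[a_1,a_3]}) H f_{n_0}\|^2,
\end{equation*}
forcing $H f_{n_0}$ to vanish on $(-\infty,a_1)\cup(a_3,\infty)$. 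Since $(-\infty,a_1)$ is an open interval disjoint from $\mathrm{supp}\,f_{n_0}\subset[a_2,a_4]$, Lemma~\ref{analytic} gives $f_{n_0}\equiv 0$, contradicting $\|f_{n_0}\|=1$.

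Next I would dispose of the accumulation at $0$ by a closed-range argument. Suppose, toward a contradiction, that there is $\delta>0$ with $\sigma_n\ge\delta$ for every $n$. Using the orthonormal basis $\{f_n\}$ of $\mathcal{L}^2(a_2,a_4)$ furnished by the SVD,
\begin{equation*}
\|H_T f\|^2 = \sum_n \sigma_n^2 |\langle f,f_n\rangle|^2 \ge \delta^2 \|f\|^2,
\end{equation*}
so $H_T$ is bounded below, hence has closed range. Combined with \eqref{rht2}, this would give $\mathrm{Ran}(H_T)=\mathcal{L}^2(a_1,a_3)$, directly contradicting \eqref{rht1}. Therefore $\inf_n \sigma_n = 0$, and since the $\sigma_n$ are strictly positive this infimum is not attained, so a subsequence must converge to $0$.

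The main obstacle I expect is the argument that $1$ cannot be an eigenvalue: one has to realize that saturating the $L^2$-norm under the projection $\mathcal{P}_{[a_1,a_3]}H$ forces $Hf$ to vanish on an interval disjoint from the support of $f$, at which point Lemma~\ref{analytic} closes the loop. The rest is essentially a bookkeeping exercise comparing $\|H_T^*H_T\|$, the discrete spectrum, and the range/kernel dichotomy from Proposition~\ref{NR}.
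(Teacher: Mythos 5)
Your proof is correct and follows essentially the same route as the paper: both hinge on Lemma~\ref{norm1} together with the observation that $1$ cannot be attained (via the isometry of $H$ and Lemma~\ref{analytic}), and on the range of $H_T$ being dense but not all of $\mathcal{L}^2(a_1,a_3)$ (Proposition~\ref{NR}) to force accumulation at $0$. The only cosmetic difference is that you phrase the $0$-argument as a bounded-below/closed-range contradiction on the sequence $\{\sigma_n\}$, whereas the paper states abstractly that $0$ and $1$ lie in $\sigma(H_T^*H_T)$ without being eigenvalues and then invokes discreteness of the spectrum.
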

\begin{proof}
First of all, $0$ and $1$ are both elements of the spectrum $\sigma(H_T^* H_T)$.
For the value $0$, this follows from $\text{Ran} (H_T^* H_T) \subset \text{Ran} (H_T^*) \neq \mathcal{L}^2(a_2,a_4)$. Moreover, since $\| H_T^* H_T \| = 1$ and $H_T^* H_T$ is self-adjoint, the spectral radius is equal to $1$. Thus,
$1 \in \sigma(H_T^* H_T)$.

The second step is to show that $0$ and $1$ are not eigenvalues of $H_T^* H_T$. \\
\textit{$0$ is not an eigenvalue:} If $H_T^* H_T f =0$, then $\| H_T f\|^2 = \langle f, H_T^* H_T f \rangle = 0$. Since $\text{Ker}(H_T) = 0$, this implies $f=0$.
Thus, $\text{Ker}(H_T^* H_T) = \{ 0 \}$. \\
\textit{$1$ is not an eigenvalue:} Suppose there exists a non-vanishing function $f \in \mathcal{L}^2(a_2,a_4)$, such that
\begin{equation*}
- \chi_{[a_2,a_4]} H \chi_{[a_1,a_3]} H \chi_{[a_2,a_4]} f = f.
\end{equation*}
Then,
\begin{align*}
\| H \chi_{[a_2,a_4]} f \|^2 = \| f \|^2 &= - \langle  \chi_{[a_2,a_4]} H \chi_{[a_1,a_3]} H \chi_{[a_2,a_4]} f, f \rangle \\
&= \langle \chi_{[a_1,a_3]} H \chi_{[a_2,a_4]} f, H \chi_{[a_2,a_4]} f \rangle \\
&= \| \chi_{[a_1,a_3]} H \chi_{[a_2,a_4]} f \|^2.
\end{align*}
This implies that $H \chi_{[a_2,a_4]} f$ is identically zero outside $[a_1,a_3]$. By Lemma \ref{analytic}, this implies $f = \chi_{[a_2,a_4]} f =0$, contradicting the assumption $f \not\equiv 0$.
Therefore, $0$ and $1$ are accumulation points of the eigenvalues of $H_T^* H_T$ and consequently, of the singular values of $H_T$. \qquad
\end{proof}

Since the singular values of $H_T$ also accumulate at a point other than zero, the operator $H_T$ is not compact.

\section{Numerical illustration}\label{section:num}
We want to illustrate the properties of the truncated Hilbert transform \textit{with overlap} obtained above for a specific configuration. We choose $a_1=0$, $a_2= 1.5$, $a_3 = 6$ and $a_4 = 7.5$. First, we consider two different discretizations of $H_T$ and calculate the corresponding singular values. We choose the first discretization to be a uniform sampling with $601$ partition points in each of the two intervals $[0,6]$ and $[1.5,7.5]$. Let vectors $X$ and $Y$ denote the partition points of $[0,6]$ and $[1.5,7.5]$ respectively. To overcome the singularity of the Hilbert kernel the vector $X$ is shifted by half of the sample size. The $i$-th components of the two vectors $X$ and $Y$ are given by $X_i = \frac{1}{100} (i + \frac{1}{2})$ and $Y_i = 1.5 + \frac{1}{100} i$; $H_T$ is then discretized as $(H_T)_{i,j} = (1/\pi) (X_i-Y_j)$, $i, j=0, \dots,600$. Figure \ref{fig:sv_can} shows the singular values for the uniform discretization. We see a very sharp transition from $1$ to $0$. 

The second discretization uses orthonormal wavelets with two vanishing moments. Let $\phi$ denote the scaling function. For the discretization we define a finest scale $J=-7$. The scaling functions on $[1.5,7.5]$ are taken to be $\phi_{-7,k}$ for integers $k=192, \dots, 957$, i.e. such that supp $\phi_{-7,k} \subset [1.5,7.5]$. On the interval $[0,6]$ the scaling functions are shifted in the sense that we take them to be $\phi_{-7, \ell+\frac{1}{2}}$ for integers $\ell=0, \dots, 765$, i.e. such that supp $ \phi_{-7,\ell+\frac{1}{2}} \subset [0,6]$. Figure \ref{fig:sv_wav} shows a plot of the singular values of this wavelet discretization of $H_T$. Although the transition is not as sharp as in \ref{fig:sv_can}, the singular values in both cases very clearly accumulate at $0$ and $1$.

\begin{figure}[ht!]
	\begin{center}
	
	\subfigure[Uniform discretization of size $601 \times 601$.]{
	\label{fig:sv_can}
            \includegraphics[width=0.45\textwidth]{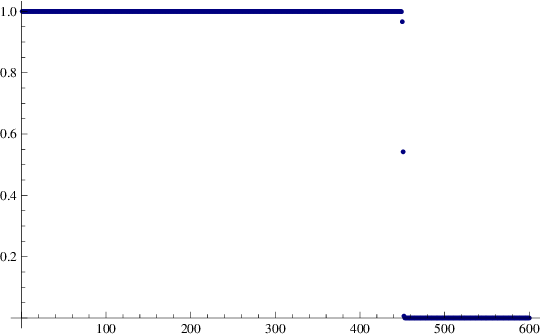}
        }
        \subfigure[Wavelet discretization of size $766 \times 766$.]{
           \label{fig:sv_wav}
                   \includegraphics[width=0.45\textwidth]{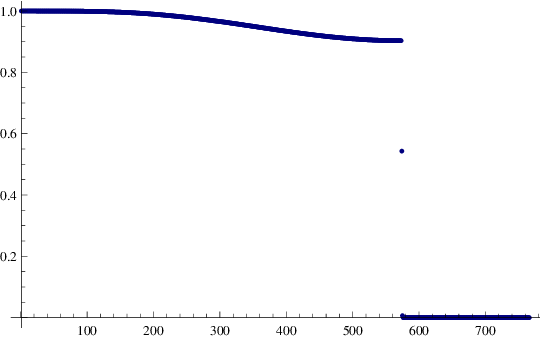}
        }
        \end{center}
  \caption{
 $a_1=0$, $a_2 = 1.5$, $a_3 = 6$, $a_4=7.5$. Singular values of two discretizations of $H_T$.  
  }
  \label{fig:singval}
\end{figure}

Next, we consider the singular functions. Figure \ref{fig:singfunc} shows the singular functions of the uniform discretization for singular values in the transmission region between $0$ and $1$. Figure \ref{fig:singfunc2} illustrates the behavior of singular functions for small singular values. As anticipated, they are bounded at the two endpoints and singular at the point of truncation. Figure \ref{fig:logplot} gives two examples of the close to linear behavior in a log-linear plot of the singular functions. In agreement with the theory in Section \ref{section:svd}, these plots confirm that the singularities are of logarithmic kind.

Based on the numerical experiments conducted, we make the following observations on the behavior of the singular functions and singular values. First, the singular functions in Figures \ref{fig:singfunc} and \ref{fig:singfunc2} have the property that two functions with consecutive indices have the number of zeros differing by $1$. Moreover, the zeros are located only inside one subinterval $I_j$. Furthermore, the plots show that singular functions with zeros within the overlap region correspond to significant singular values, whereas those which have zeros outside the overlap region correspond to small singular values. Finally, we remark that singular functions for small singular values are concentrated \textit{outside} the ROI $I_2=[a_2,a_3]$.

\begin{figure}[ht!]
     \begin{center}

 \subfigure[Singular functions $f_{448}$ and $f_{449}$. ]{
            \label{fig:u01}
            \includegraphics[width=0.45\textwidth]{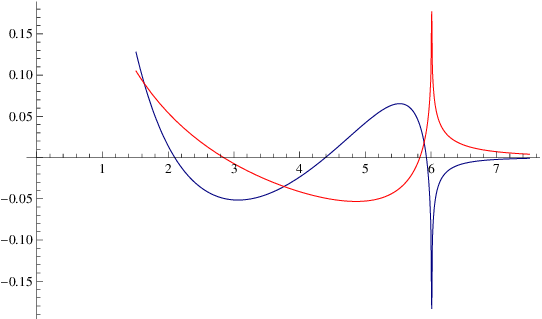}
        }
        \subfigure[Singular functions $f_{450}$ and $f_{451}$.]{
            \label{fig:u23}
            \includegraphics[width=0.45\textwidth]{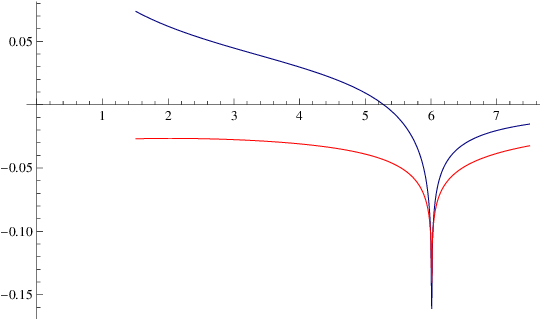}
        }\\ 
        \subfigure[Singular functions $g_{448}$ and $g_{449}$.]{
            \label{fig:v01}
            \includegraphics[width=0.45\textwidth]{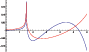}
        }
        \subfigure[Singular functions $g_{450}$ and $g_{451}$.]{
           \label{fig:v23}
           \includegraphics[width=0.45\textwidth]{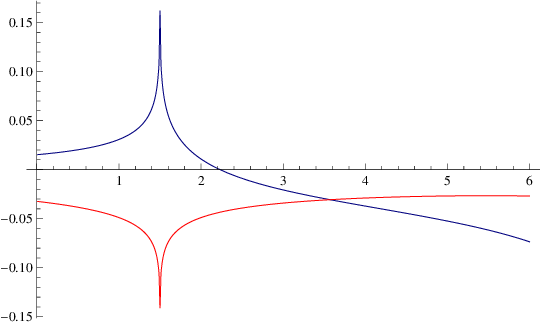}
        }
       
    \end{center}
    \caption{
       Consecutive singular functions for the uniform discretization with $3$, $2$, $1$ and no zeros within the overlap region. The corresponding singular values are $\sigma_{448} = 0.999963$, $\sigma_{449} = 0.998782$, $\sigma_{450} = 0.966192$,  $\sigma_{451} = 0.542071$.
     }
   \label{fig:singfunc}
\end{figure}

\begin{figure}[ht!]
     \begin{center}

        \subfigure[Singular functions $f_{452}$ and $f_{453}$.]{
            \label{fig:uout12}
            \includegraphics[width=0.45\textwidth]{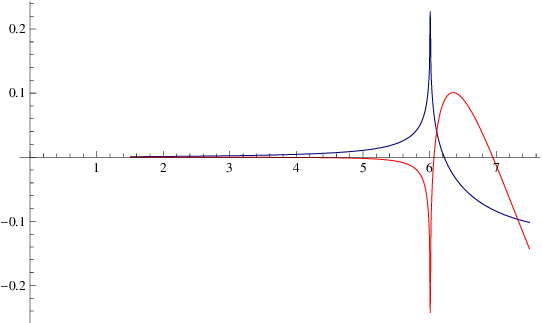}
        }
         \subfigure[Singular functions $f_{454}$ and $f_{455}$.]{
            \label{fig:uout34}
            \includegraphics[width=0.45\textwidth]{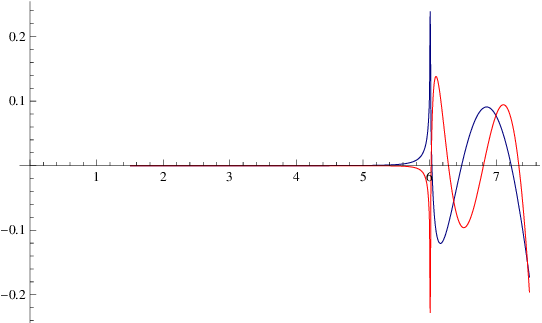}
        }\\
        \subfigure[Singular functions $g_{452}$ and $g_{453}$.]{
           \label{fig:vout12}
           \includegraphics[width=0.45\textwidth]{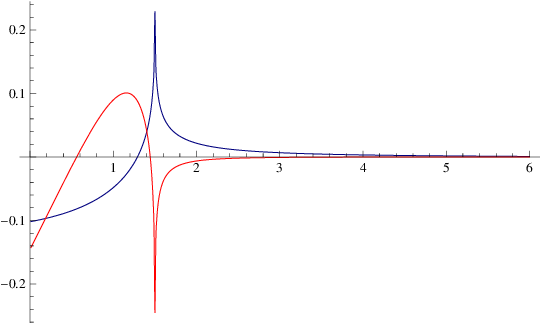}
        }
        \subfigure[Singular functions $g_{454}$ and $g_{455}$.]{
           \label{fig:vout34}
           \includegraphics[width=0.45\textwidth]{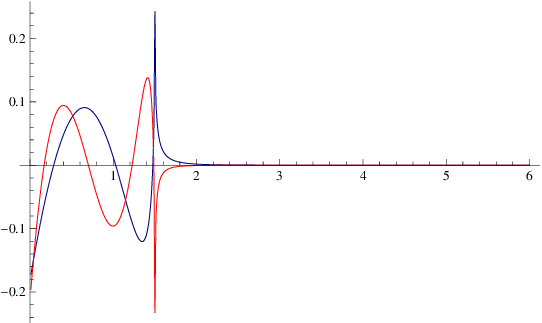}
        }
 \end{center}
    \caption{
       Consecutive singular functions for the uniform discretization with $1$, $2$, $3$ and $4$ zeros outside the overlap region. The corresponding singular values are $\sigma_{452} = 6.29189 \cdot 10^{-3}$, $\sigma_{453} = 2.83533 \cdot 10^{-5}$, $\sigma_{454} = 1.18274 \cdot 10^{-7}$,  $\sigma_{455} =4.83357 \cdot 10^{-10}$.
     }
   \label{fig:singfunc2}
\end{figure}

\begin{figure}[ht!]
     \begin{center}

        \subfigure[]{
            \includegraphics[width=0.45\textwidth]{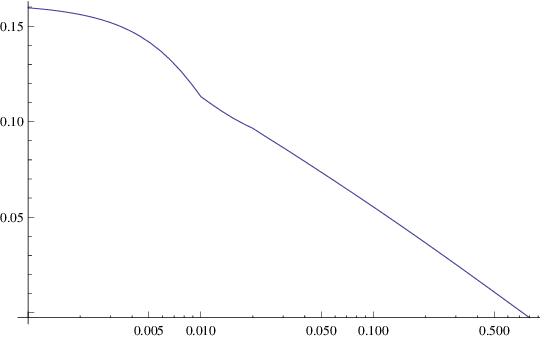}
        }
         \subfigure[]{
            \includegraphics[width=0.45\textwidth]{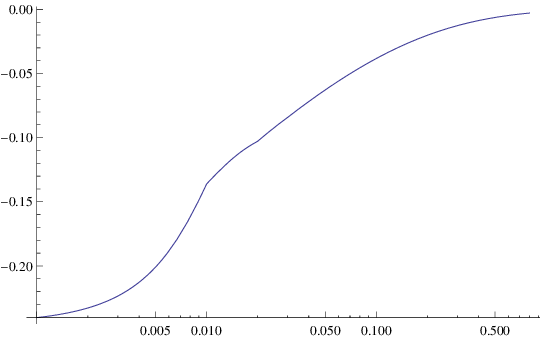}
        }
         \end{center}
    \caption{
       Log-linear plot, i.e. with a logarithmic $x$ scale of the singular functions $g_{450}$ (left) and $g_{453}$ (right) on the interval $[1.5, 2.3].$
      }
   \label{fig:logplot}
\end{figure}

\subsection*{Acknowledgments}
RA was supported by an FWO Ph.D. fellowship and would like to thank Prof. Ingrid Daubechies and Prof. Michel Defrise for their supervision and contribution. Also, RA would like to thank the Department of Mathematics at Duke University for hosting several research stays. AK was supported in part by NSF grants DMS-0806304 and DMS-1211164.

\clearpage
\bibliographystyle{plain}
\bibliography{Spectral-Analysis-Truncated-HT-v2}

\end{document}